\documentclass[11pt]{article}

\usepackage[a4paper, left=3cm,top=3cm]{geometry}
\usepackage{amsmath,amssymb,amsthm,amsfonts}
\usepackage{hyperref,graphicx,subfigure,mathtools,enumitem}
\usepackage{authblk}
\usepackage[onehalfspacing]{setspace}
\usepackage[dvipsnames]{xcolor}
\usepackage[utf8]{inputenc}
\usepackage[ruled, linesnumbered]{algorithm2e}
\usepackage{siunitx}

\setcounter{Maxaffil}{5}

\newtheorem{theorem}{Theorem}[section]
\newtheorem{remark}[theorem]{Remark}

\newtheorem{condition}[theorem]{Condition}
\newtheorem{lemma}[theorem]{Lemma}

\newcommand\set[1]{{\{#1\}}}
\newcommand\abs[1]{\left\vert#1\right\vert}
\newcommand\norm[1]{{\left\Vert#1\right\Vert}}

\newcommand\inner[2]{[#1] \cdot [#2]}
\newcommand\snorm[1]{\Vert#1\Vert}
\newcommand\sabs[1]{{\lvert#1\rvert}}

\newcommand{\R}{\mathbb R}

\newcommand{\N}{\mathbb N}
\newcommand{\sph}{\mathbb S}

\DeclareMathOperator{\UUo}{\mathbf{W}}
\DeclareMathOperator{\VVo}{\mathbf{W}^\sharp}

\DeclareMathOperator{\Ko}{\mathbf{K}}
\DeclareMathOperator{\Wo}{\mathbf{W}}
\DeclareMathOperator{\Xo}{\mathbf{X}}
\DeclareMathOperator{\Mo}{\mathbf{M}}

\DeclareMathOperator{\Eo}{\mathbf{E}}
\DeclareMathOperator{\Po}{\mathbf{P}}
\DeclareMathOperator{\Qo}{\mathbf{Q}}
\DeclareMathOperator{\Ao}{\mathbf{A}}

\DeclareMathOperator{\fixT}{\mathbf K}                        
\DeclareMathOperator{\timereversal}{\boldsymbol{\Lambda}}

\DeclareMathOperator{\supp}{supp}
\DeclareMathOperator{\prox}{prox}
\DeclareMathOperator{\diam}{diam}
\DeclareMathOperator*{\argmin}{arg\,min}
\DeclareMathOperator{\id}{Id}

\newcommand{\Dinner}{\Omega}
\newcommand{\Douter}{D}

\newcommand{\ini}{f}
\newcommand{\fini}{\mathbf{f}}
\newcommand{\data}{g}

\newcommand{\reg}{\mathcal{R}}    
\newcommand{\tik}{\mathcal{T}}

\newcommand{\Hini}{\mathbb{X}(\Dinner)}                 
\newcommand{\Hfinal}{\mathbb{Y}(\Douter^c)}              
\newcommand{\Hu}{\mathbb{H}}

\newcommand{\Ad}{\mathcal{A}}                 
\newcommand{\Dd}{\mathcal{D}}

\newcommand{\f}{\mathbf{f}}
\newcommand{\vu}{\mathbf{u}}
\newcommand{\en}{E}
\newcommand{\la}{\lambda}
\newcommand{\sg}{\sigma}
\newcommand{\e}{w}

\setlength{\parskip}{0.5em}
\setlength{\parindent}{0em}
\numberwithin{equation}{section}
\allowdisplaybreaks

\title{Full Field Inversion of the Attenuated Wave Equation: Theory  and Numerical Inversion}


\date{May 28, 2024}

\author{Ngoc Do}

\affil{Department of Mathematics, Missouri State University \authorcr
Springfield, Missouri, USA\authorcr
 E-mail: \tt{ngocdo@MissouriState.edu} 
}

\author{Markus Haltmeier}

\affil{Department of Mathematics, University of Innsbruck\authorcr
 Technikerstrasse 13, 6020 Innsbruck, Austria
 \authorcr E-mail:  \texttt{markus.haltmeier@uibk.ac.at}
 }

\author{Richard Kowar}

\affil{Department of Mathematics, University of Innsbruck\authorcr
Technikerstrasse 13, 6020 Innsbruck, Austria
 \authorcr E-mail: \texttt{richard.kowar@uibk.ac.at}
 }

\author{Linh V. Nguyen}

\affil{Department of Mathematics, University of Idaho\authorcr
              875 Perimeter Dr, Moscow, ID 83844, USA\authorcr
               E-mail: \tt{lnguyen@uidaho.edu}
}

\author{Robert Nuster}

\affil{Department of Physics, University of Graz\authorcr
Universitaetsplatz 5, 8010 Graz,  Austria\authorcr
E-mail: \tt{ro.nuster@uni-graz.at}
}

\begin{document}

\maketitle

\begin{abstract}
Standard photoacoustic tomography (PAT) provides data that consist of time-dependent signals governed by the wave equation, which are measured on an observation surface. In contrast, the measured data from the recently invented full-field PAT is the Radon transform of the solution of the wave equation on a spatial domain at a single instant in time. While reconstruction using classical PAT data has been extensively studied, not much is known about the full-field PAT problem. In this paper, we study full-field photoacoustic tomography with spatially variable sound speed and spatially variable damping. In particular, we prove the uniqueness and stability of the associated single-time full-field wave inversion problem and develop algorithms for its numerical inversion using iterative and variational regularization methods. Numerical simulations are presented for both full-angle and limited-angle data cases.

\medskip\noindent\textbf{Keywords:}
Final time wave inversion; full-field PAT; attenuated wave equation; uniqueness; stability; iterative methods. 
\end{abstract}

\section{Introduction}

Photoacoustic tomography (PAT) is a hybrid tomographic imaging modality that combines the high resolution of ultrasound imaging with high optical contrast. In PAT, an object is irradiated by short laser pulses, which then produces an acoustic pressure wave as a result of light absorption inside the object. In the conventional approach, pressure waves are recorded as temporally resolved signals outside of the object and used to reconstruct the initial pressure. The associated wave inversion problem consists of recovering the initial data of the wave equation from data $u|_{\Gamma \times [0,T]}$ where $u$ is the induced pressure wave, $T$ is the measurement time, and $\Gamma$ is the measurement surface. For the standard setup, several theoretical and numerical results on uniqueness, stability, and reconstruction algorithms have been developed in various situations including variable speed of sound, acoustic damping, and limited data \cite{PAT_review2,poudel2019survey}. In contrast to the standard problem, in this work, we address the more recent  full-field PAT invented in \cite{ccd,fullfield_intro}, where the associated wave inversion problem consists of recovering the initial data of the wave equation from measurements $u(\cdot, T)$ at a single time instance.

\subsection{Full field PAT}
\label{sec:ffp}

In this article, we will consider full field PAT for the damped wave equation
\begin{align}  
\label{eq:wave1}
&[c^{-2}(x)\partial_{tt} + a(x)\partial_t - \Delta] u(t,x)
= 0 \quad (x,t)\in\R^d\times (0,\infty)
\\ \label{eq:wave2}
& u(x,0)   = \ini_1(x) \quad x \in\R^d
\\ \label{eq:wave3}
& \partial_t u(x,0) = \ini_2(x)  \quad x \in \R^d \,,
\end{align}
where $\fini = (\ini_1, \ini_2)$ are the initial data, $c$ is the spatially varying  speed of sound, and $a$ is the spatially varying damping coefficient. For  the application the cases $d=2,3$ are  relevant, but in the theory part we will consider general dimension. Note that even for standard PAT data, attenuation is still an ongoing research topic, and various attenuation models have been used and studied in \cite{la2006image,nguyen2016dissipative,homan,palacios2016reconstruction,treeby2010photoacoustic,kowar2011attenuation}. Here, for the sake of clarity, we will only work with    \eqref{eq:wave1}-\eqref{eq:wave3}. 

Measurements in full-field PAT consist of line integrals of the solution of \eqref{eq:wave1}-\eqref{eq:wave3} at a single measurement time $T>0$ outside the sample, from which we aim to recover the initial data $\fini = (\ini_1, \ini_2)$ assumed to be supported in a domain $\Dinner \subseteq \R^3$.
More formally, let us denote by $\Dinner^c = \R^3 \setminus \Dinner$ the part outside of $\Dinner$ and consider the initial-to-final time wave operator $\fini \mapsto \Wo_T \fini \coloneqq u(\cdot,T)|_{\Dinner^c}$ and the 2D X-ray transform $h \mapsto \Xo h$ defined by 
\begin{equation} \label{ew:X}
\Xo h (\theta,\xi,x_3) \coloneqq \int_\R h(\xi\theta+s\theta^\perp,x_3) \, ds 
\end{equation}
for $(\theta,\xi,x_3) \in \sph^1 \times \R \times \R$ such that $(\xi\theta+s\theta^\perp,x_3)$ is contained in $\Dinner^c$.
Note that restricting the set of line integrals even in the complete data case reflects the fact that in practice, at least integrals over lines intersecting $\Dinner$ are not available. More projection data might be missing due to other practical constraints.

With the above notation, the full-field PAT inversion problem consists of recovering the PA source $\fini = (\ini_1, \ini_2)$ from a noisy approximation of the data $\Xo \Wo_T \fini$. Image reconstruction can be performed in two steps by first inverting the X-ray projection $\Xo$ and then inverting the initial-to-final time wave operator $\Wo_T$. Inverting $\Xo$ is a well-studied problem. In this paper, we develop the theory for the inversion of $\Wo_T$.

\subsection{Main results}

As the main results of this paper, for the first time, we establish theoretical results for recovering $(\ini_1, \ini_2)$ from single-time full-field data $\Wo_T \ini = u(\cdot, T)|_{\Dinner^c}$ of the solution of \eqref{eq:wave1}-\eqref{eq:wave3}. Here $T > 0$ is the measurement time and $\Dinner \subseteq \R^d$ is the measurement domain. Specifically, we establish the uniqueness and stability of the inversion and provide an exact time reversal inversion method for it. These results extend the analysis of \cite{zangerl2019full,time_reversal_ff}, where we established uniqueness and stability for the variable sound speed case in the absence of attenuation.

Concerning image reconstruction, we go one step further and directly work with the full-field PAT data instead of first inverting the X-ray transform and the wave equation. In particular, we extend the one-step algorithm \cite{haltmeier2019}, in which the source is recovered directly from the full-field measured data. For that purpose, we apply iterative and variational regularization methods. We also investigate the influence of noise and limited data on the reconstruction of various methods.

\section{Theory}

Throughout this manuscript, let $\Omega \subseteq \R^d$ be a bounded domain where $d \in \N$ is the spatial dimension, and let $c, a \colon \R^3 \to \R$ be the variable sound speed and variable damping coefficient, respectively, where we assume that $a$ and $c-1$ are both smooth and supported inside $\Dinner$. We further assume $a$ is nonnegative and that $c$ is bounded away from zero. We consider the damped wave equation \eqref{eq:wave1}-\eqref{eq:wave3} with pairs of initial data $\fini = (\ini_1, \ini_2)$ in the Hilbert space $\Hini \coloneqq H_0(\Dinner) \times L^2(\Dinner, c^{-2})$ equipped with the energy norm $\norm{\fini}^2_{\Hini} = \norm{\nabla \ini_1}^2_{L^2(\Dinner)} + \norm{\ini_2}^2_{L^2(\Dinner, c^{-2})}$. 
Moreover, let $\Douter \subseteq \R^d$ be a domain such that $\supp(u(\cdot, t)) \subseteq \Douter$ for all initial data $\fini = (\ini_1, \ini_2) \in \Hini$ and all $t \in [0,T]$. Set $\Dinner^c \coloneqq \Douter \setminus \Dinner$ and consider $\Hfinal = \{ g \in H^1(\Dinner^c) \mid \supp(g) \subseteq \Douter \setminus \Dinner \}$ equipped with the norm $\norm{\data}_{\Hfinal}^2 = \norm{\nabla \data}^2_{L^2(\Dinner^c)}$.

For the mathematical  analysis we will study the forward operator 
\begin{equation} \label{eq:fwd}
\UUo_T \colon \Hini \to  \Hfinal \colon  \fini \mapsto  u(\cdot,T)|_{\Dinner^c} \,,
\end{equation}
where $u(\cdot,T)$ solves \eqref{eq:wave1}-\eqref{eq:wave3}  with initial data $\fini \in \Hini$. Standard theory of  second order hyperbolic equations \cite{evans2022partial} then gives existence and uniqueness  of a weak solution that in particular satisfies $u  \in C([0,T], H^1_0(\R^d))$, making the forward map $\UUo_T$ well defined and bounded.  In this section we establish  uniqueness and  stability of inverting  $\UUo_T$ and  present an exact time reversal  inversion technique.

\subsection{Uniqueness}

We begin the theoretical analysis with the uniqueness of the full wave inversion, which amounts to the injectivity of the forward operator defined in \eqref{eq:fwd}.

\begin{lemma}[Auxiliary uniqueness  results]\label{lem:uni} 
Let $u$ be the solution of  \eqref{eq:wave1}-\eqref{eq:wave3} with initial data $\fini \in \Hini$.
\begin{enumerate}[topsep=0em, itemsep=0em,label=(\alph*)]
\item \label{it:uni1} $u( \cdot ,T)|_{\Dinner^c} = 0 $ implies  $(\partial_t u)(\cdot,T)|_{\Dinner^c} = 0$.
\item  \label{it:uni2} If $T>\diam(\Dinner)$, then $u( \cdot ,T)|_{\Dinner^c} = (\partial_t u)(\cdot,T)|_{\Dinner^c} = 0$ implies  $\fini = 0$.
\end{enumerate}
\end{lemma}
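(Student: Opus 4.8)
The plan is to prove the two parts by exploiting, respectively, an energy/time-reversal symmetry of the damped wave equation and finite speed of propagation combined with the classical uniqueness for the adjoint problem. For part \ref{it:uni1}, the key observation is that the damped wave equation \eqref{eq:wave1} has a natural time-reversal structure: if $u(t,x)$ solves $[c^{-2}\partial_{tt} + a\partial_t - \Delta]u = 0$, then $v(t,x) \coloneqq u(T-t,x)$ solves the equation with the sign of the damping term flipped, $[c^{-2}\partial_{tt} - a\partial_t - \Delta]v = 0$. Since $a$ and $c-1$ are supported in $\Dinner$, on the exterior region $\Dinner^c$ both $u$ and $\partial_t u$ satisfy the \emph{free} wave equation $[\partial_{tt} - \Delta]w = 0$. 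I would consider the energy of $u$ localized to a suitable exterior neighbourhood, or more directly argue as follows: the pair $(u(\cdot,T), \partial_t u(\cdot,T))$ restricted to $\Dinner^c$, together with the free wave equation there, is determined by a domain-of-dependence argument. If $u(\cdot,T)|_{\Dinner^c}=0$, I want to conclude $\partial_t u(\cdot,T)|_{\Dinner^c}=0$. The cleanest route is to run the wave equation \emph{backward} from time $T$: let $w$ solve the free wave equation on all of $\R^d$ with Cauchy data $(0, \partial_t u(\cdot,T))$ at time $T$; one checks via finite speed of propagation (since the data $\partial_t u(\cdot,T)$ may be supported near $\Dinner$ too) that this does not immediately vanish. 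Instead, I expect the intended argument uses that $u$ solves a \emph{stationary-in-the-exterior} equation and an integration-by-parts (energy) identity on the slab $\Dinner^c \times \{T\}$: multiplying by an appropriate test function and using that $u(\cdot,T)$ vanishes on the open set $\Dinner^c$ forces its tangential derivatives to vanish there, and the equation then propagates this to the normal/time derivative. I would make this precise by a unique continuation or Holmgren-type statement for the free wave equation across the spacelike hypersurface $\{t=T\}$.

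For part \ref{it:uni2}, assume now $u(\cdot,T)|_{\Dinner^c} = \partial_t u(\cdot,T)|_{\Dinner^c} = 0$ and $T > \diam(\Dinner)$. The strategy is a finite-speed-of-propagation argument combined with backward uniqueness. Since $\supp \fini \subseteq \Dinner$ and the sound speed equals $1$ outside $\Dinner$, the solution $u(\cdot,T)$ and $\partial_t u(\cdot,T)$ are supported in the $T$-neighbourhood of $\Dinner$; the hypothesis says this support, when intersected with $\Douter \setminus \Dinner$, carries zero Cauchy data. I would then introduce the time-reversed function $v(t,x) = u(T-t,x)$, which solves the antidamped wave equation with Cauchy data $(v(0), \partial_t v(0)) = (u(T), -\partial_t u(T))$, vanishing on $\Dinner^c$. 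One propagates this vanishing inward: by finite speed of propagation for the backward problem, at time $t$ the solution $v(t,\cdot)$ can only have become nonzero in $\Dinner^c$ within distance $t$ of $\Dinner$, but since $T > \diam(\Dinner)$ one can cover $\Dinner$ by backward light cones emanating from points of $\Dinner^c$ at time $T$, forcing $v(T,\cdot) = \fini$ to vanish on all of $\Dinner$. More carefully: for each $x_0 \in \Dinner$, pick $y_0 \in \Dinner^c$ with $|x_0 - y_0| < T$ (possible since $\diam(\Dinner) < T$ and $\Dinner$ is bounded, after enlarging $\Dinner$ slightly if necessary so that nearby exterior points exist); the backward solution has zero Cauchy data near $y_0$ on the exterior, and Holmgren/unique continuation along the timelike tube propagates the zero down to $x_0$ at time $0$. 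Hence $\ini_1 = u(0,\cdot) = 0$ and, using part \ref{it:uni1} run in reverse or the same argument on $\partial_t u$, also $\ini_2 = 0$.

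The main obstacle I anticipate is being rigorous about the unique-continuation step in part (b): one needs a statement valid for the \emph{variable-speed} equation across non-characteristic hypersurfaces, and the natural tool is the sharp unique continuation theorem for second-order hyperbolic operators (Tataru--Robbiano--Zuily--Hörmander type), which requires the time interval to exceed the relevant travel distance — this is precisely where $T > \diam(\Dinner)$ enters. I would cite this result rather than reprove it. A secondary technical point is handling the damping term $a\partial_t$: it is a lower-order perturbation supported in $\Dinner$, so it does not affect finite speed of propagation or the unique continuation principle, but I would remark explicitly that the energy estimates and domain-of-dependence statements are unaffected by the presence of $a$. Part (a) should be comparatively routine once the right formulation (reflection symmetry of the wave equation about $t=T$ in the undamped exterior region, plus a uniqueness statement for Cauchy data on $\{t=T\}$) is in place.
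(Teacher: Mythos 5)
The paper disposes of both items by citation (\cite{time_reversal_ff} for item (a), \cite{homan} for item (b)), so the question is whether your reconstructed arguments would actually go through. For item (b) they would: time-reversing about $t=T$, using finite speed of propagation to see that the vanishing Cauchy data on $\Dinner^c$ forces $u$ to vanish on a spacetime region abutting $\partial\Dinner$, and then invoking Tataru--Robbiano--Zuily--H\"ormander unique continuation for the variable-coefficient operator (with the damping as a harmless time-independent lower-order perturbation) to push the vanishing into $\Dinner$ under $T>\diam(\Dinner)$ is precisely the mechanism behind the result of \cite{homan} that the paper cites. Two small remarks: the unique continuation step already yields $u\equiv 0$ on a neighbourhood of $\Dinner\times\{0\}$, so both $\ini_1$ and $\ini_2$ vanish simultaneously and no appeal to item (a) is needed there; and the relevant distance is the geodesic one for the metric $c^{-2}(x)\,dx^2$, not the Euclidean one you use when picking $y_0$.

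Item (a), however, contains a genuine gap. After discarding your first attempt you settle on ``unique continuation or Holmgren across the spacelike hypersurface $\{t=T\}$'' starting from the single datum $u(\cdot,T)|_{\Dinner^c}=0$. This mechanism cannot work: on a non-characteristic spacelike hypersurface the Cauchy problem requires \emph{both} traces $u$ and $\partial_t u$, and knowing $u=0$ on an open subset of $\{t=T\}$ only annihilates the spatial derivatives of $u$ there; it gives no control whatsoever on $\partial_t u$, which is exactly the quantity you need to determine. Concretely, $u(x,t)=g(x_1-t)-g(x_1+t-2T)$ solves the free wave equation, satisfies $u(\cdot,T)\equiv 0$ everywhere, yet $\partial_t u(\cdot,T)=-2g'(\,\cdot\,-T)\not\equiv 0$; hence no argument localized near $t=T$ can prove (a). What makes (a) true is global information your sketch never uses: since $\supp\fini\subseteq\Dinner$ and $a=0$, $c=1$ outside $\Dinner$, the restriction of $u$ to the exterior solves the constant-speed undamped wave equation with \emph{zero Cauchy data at $t=0$}, so the exterior values of $u$ are constrained over the whole interval $[0,T]$ through the exterior initial-boundary value problem; it is this structure that the argument of \cite{time_reversal_ff} exploits, and which the paper's proof imports verbatim after noting $a|_{\Dinner^c}=0$. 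You correctly record that the exterior equation is free, but you never convert that observation into the global exterior-problem argument, so your proof of (a) does not close.
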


\begin{proof}
Item \ref{it:uni1} has been established in  \cite{time_reversal_ff} for the case of vanishing attenuation. Using that the respective arguments  of \cite{time_reversal_ff}  are based on the exterior problem for the wave equation and that we  have $a|_{\Dinner^c} = 0$ gives the claim. Item \ref{it:uni2} follows from the argument of \cite{homan}  on the uniqueness with standard PAT data that. Note that the argumentation there is independent of th particular form of the initial data.    
\end{proof}

From Lemma~\ref{lem:uni}, we conclude the following  uniqueness result.

\begin{theorem}[Uniqueness of full field wave inversion]\label{T:uni} 
If  $T>\diam(\Dinner)$, then  the single time wave transform $\UUo_T$ defined by \eqref{eq:fwd} is an injective bounded linear mapping.
\end{theorem}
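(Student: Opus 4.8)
The plan is to read the theorem off Lemma~\ref{lem:uni}: essentially all of the analytic work has already been done there, so what remains is bookkeeping. First I would dispatch the easy structural claims. Linearity of $\UUo_T$ is inherited from the linearity of the solution map $\fini \mapsto u$ for the linear problem \eqref{eq:wave1}-\eqref{eq:wave3}, post-composed with the (linear) restriction to $\Dinner^c$. Boundedness is exactly the well-posedness statement recorded after \eqref{eq:fwd}: the weak solution satisfies $u \in C([0,T], H^1_0(\R^d))$ together with an energy estimate controlling $\norm{\nabla u(\cdot,T)}_{L^2(\R^d)}$ by $\norm{\fini}_{\Hini}$, and restricting $u(\cdot,T)$ to the subdomain $\Dinner^c$ only decreases the $\Hfinal$-norm; hence $\UUo_T$ maps $\Hini$ continuously into $\Hfinal$.

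The substance is injectivity, and the strategy is to chain the two parts of Lemma~\ref{lem:uni}. Suppose $\fini \in \Hini$ lies in the kernel, so that $\UUo_T \fini = u(\cdot,T)|_{\Dinner^c} = 0$, where $u$ is the solution with initial data $\fini$. Part~\ref{it:uni1} of Lemma~\ref{lem:uni} — which requires no lower bound on $T$ — upgrades this to $(\partial_t u)(\cdot,T)|_{\Dinner^c} = 0$ as well, so both Cauchy data of $u$ at time $T$ vanish on $\Dinner^c$. Since by hypothesis $T > \diam(\Dinner)$, part~\ref{it:uni2} now applies verbatim and forces $\fini = 0$. Thus $\ker \UUo_T = \{0\}$, i.e.\ $\UUo_T$ is injective, which together with the first paragraph completes the proof.

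There is no genuine obstacle at the level of Theorem~\ref{T:uni} itself — the hard part has been externalized into Lemma~\ref{lem:uni}, whose two items in turn rest on (a) energy/unique-continuation arguments for the exterior wave problem, exploiting that $a$ vanishes on $\Dinner^c$, and (b) the finite-speed-of-propagation and observability analysis of \cite{homan} for standard PAT, which only uses that the data are Cauchy data at a fixed time and not their particular form. The one point worth double-checking when writing this up is that the standing assumptions of Section~2 on $c$ and $a$ (smooth, $a \ge 0$ supported in $\Dinner$, $c - 1$ supported in $\Dinner$, $c$ bounded away from $0$) indeed match the hypotheses under which the cited results are proved, so that the lemma, and hence the theorem, applies without modification; but this has already been arranged.
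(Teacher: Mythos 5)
Your proposal is correct and follows essentially the same route as the paper: linearity and boundedness come from the standard well-posedness of \eqref{eq:wave1}-\eqref{eq:wave3}, and injectivity is obtained by chaining part~\ref{it:uni1} of Lemma~\ref{lem:uni} (to upgrade vanishing of $u(\cdot,T)|_{\Dinner^c}$ to vanishing of both Cauchy data on $\Dinner^c$) with part~\ref{it:uni2} under the hypothesis $T>\diam(\Dinner)$. Your write-up is simply a more explicit version of the paper's two-line argument.
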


\begin{proof}
Due to the linearity  of  $\UUo_T$ it suffices to show that  $\UUo_T \fini = u(\cdot,T)|_{\Dinner^c}=0$ implies  $\fini =0$.  Because $T>\diam(\Dinner)$  this  follows by combining \ref{it:uni1}, \ref{it:uni2}  from Lemma \ref{lem:uni}.      \end{proof}

\subsection{Stability}

Next, we turn to the stability of inverting $\UUo_T$ in terms of the norms $\norm{\cdot}_\Hini$ and $\norm{\cdot}_\Hfinal$. For this, we will make use of the visibility Condition~\ref{cond:vis}, which in particular implies $T > \diam \Dinner$ and thus, due to Theorem~\ref{T:uni}, the injectivity of $\UUo_T$.

\begin{condition}[Visibility] \label{cond:vis}
All geodesics with respect to the metric $c^{-2}(x)\,dx^2$ that are strictly contained in $\Dinner$ have length less than $T$. 
\end{condition}

The following lemma is a major ingredient of our stability analysis. It can be derived from the results of \cite{palacios2016reconstruction,palaciospartial}, involving sophisticated microlocal analysis. Below, we will present a different and more elementary one.

\begin{lemma} \label{lemma:stability} 
Suppose  the visibility condition  holds, let $u$ be the solution of \eqref{eq:wave1}-\eqref{eq:wave3} with initial data $\fini \in \Hini$ and write $\vu = (u,\partial_t u)$. Then for some  constant $C$ independent of $\fini$, the following stability  estimate holds  
\begin{equation} \label{eq:stab}
\norm{\fini}_{\Hini} \leq C \norm{\vu(\cdot,T)}_{\Hu(\Dinner^c)}^2 \,,
\end{equation}
where $\norm{\vu(\cdot,T)}_{\Hu(\Dinner^c)}^2 \coloneqq \|\partial_t u\|_{L^2(\Dinner^c)}^2 + \|\nabla u\|_{L^2(\Dinner^c)}^2$ 
\end{lemma}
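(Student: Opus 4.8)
The plan is to realize $\fini$ through an exact time-reversal formula and to read \eqref{eq:stab} off a strict-contraction property of the associated error operator; the visibility Condition~\ref{cond:vis} enters precisely at that contraction step. I rely on the two structural features of the setup: since $a$ and $c-1$ are supported in $\Dinner$, $u$ solves the free wave equation $\partial_{tt}u-\Delta u=0$ on $\Dinner^c\times(0,T)$; and $\supp u(\cdot,t)\subseteq\Douter$, so (by finite speed of propagation, after enlarging $\Douter$ and placing a Dirichlet wall far from $\overline\Dinner$ if a bounded domain is wanted) everything may be localized. In particular the given trace $\vu(\cdot,T)|_{\Dinner^c}$ together with $u\equiv0$ outside $\Douter$ determines the whole pair $\vu(\cdot,T)$ on $\R^d\setminus\overline\Dinner$; only $\vu(\cdot,T)|_{\Dinner}$ is unknown.

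I would first construct the time reversal. Extend the data to $\widehat g=(\widehat g_1,\widehat g_2)$ on $\R^d$, equal to $\vu(\cdot,T)$ on $\R^d\setminus\overline\Dinner$, with $\widehat g_1|_{\Dinner}$ the $c^{-2}$-harmonic extension of the trace $u(\cdot,T)|_{\partial\Dinner}$ and $\widehat g_2|_{\Dinner}=0$; by minimality of the harmonic extension, $\norm{\widehat g}_{\Hu(\R^d)}\le C\norm{\vu(\cdot,T)}_{\Hu(\Dinner^c)}$. For a pair $h$, let $B[h]$ be the solution of \eqref{eq:wave1} with Cauchy data $h$ prescribed at $t=T$ and run to $t=0$; this backward problem is well posed on $[0,T]$ because reversing time turns it into a forward antidamped problem, whose energy is bounded by $e^{CT}\norm{h}^2$. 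By uniqueness $B[\vu(\cdot,T)]=u$, so $\bigl(B[\vu(\cdot,T)](\cdot,0),\partial_tB[\vu(\cdot,T)](\cdot,0)\bigr)=\fini$. Define $\timereversal\colon\vu(\cdot,T)|_{\Dinner^c}\mapsto\bigl(B[\widehat g](\cdot,0),\partial_tB[\widehat g](\cdot,0)\bigr)$ and $\Ko\fini:=\fini-\timereversal\bigl(\vu(\cdot,T)|_{\Dinner^c}\bigr)$. Linearity of $h\mapsto B[h]$ gives $\Ko\fini=\bigl(B[\vu(\cdot,T)-\widehat g](\cdot,0),\partial_tB[\vu(\cdot,T)-\widehat g](\cdot,0)\bigr)$, and since $\vu(\cdot,T)-\widehat g$ is supported in $\overline\Dinner$ with first component in $H_0(\Dinner)$, the backward energy bound yields $\norm{\Ko\fini}_{\Hini}\le C\,\norm{\vu(\cdot,T)}_{\Hu(\Dinner)}$: the error is governed by the fraction of the energy of $u$ still inside $\Dinner$ at time $T$.

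The heart of the proof is to show that $\Ko\colon\Hini\to\Hini$ is a strict contraction, i.e.\ that Condition~\ref{cond:vis} forces a definite fraction of the energy to have escaped $\Dinner$ by time $T$, so that $\norm{\vu(\cdot,T)}_{\Hu(\Dinner)}^2\le\theta\,\norm{\fini}_{\Hini}^2$ for some $\theta<1$ --- possibly only up to a lower-order term, which is absorbed by a compactness--uniqueness argument using the injectivity of $\UUo_T$ (Theorem~\ref{T:uni}, available because Condition~\ref{cond:vis} implies $T>\diam\Dinner$). Once $\norm{\Ko}<1$ is known, $\id-\Ko$ is invertible by a Neumann series, so $\fini=(\id-\Ko)^{-1}\timereversal\bigl(\vu(\cdot,T)|_{\Dinner^c}\bigr)=\sum_{k\ge0}\Ko^k\timereversal\bigl(\vu(\cdot,T)|_{\Dinner^c}\bigr)$ is the exact time-reversal reconstruction, and combining this with $\norm{\timereversal\bigl(\vu(\cdot,T)|_{\Dinner^c}\bigr)}_{\Hini}\le\norm{\timereversal}\,\norm{\vu(\cdot,T)}_{\Hu(\Dinner^c)}$ (from the backward bound and the extension estimate) yields $\norm{\fini}_{\Hini}^2\le C\,\norm{\vu(\cdot,T)}_{\Hu(\Dinner^c)}^2$, which is \eqref{eq:stab}. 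The attenuation is felt only through the antidamped backward evolutions --- harmless on the finite interval $[0,T]$ --- and through the lower-order correction above; setting $a\equiv0$ recovers the non-attenuated stability of \cite{zangerl2019full,time_reversal_ff}.

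I expect the genuine obstacle to be the contraction estimate $\norm{\Ko}<1$. This is exactly the geometric information extracted by the sophisticated microlocal analysis of \cite{palacios2016reconstruction,palaciospartial}, and an elementary replacement must establish by energy methods --- propagation of energy along the geodesics of $c^{-2}dx^2$ --- that the residual interior energy at time $T$ is a uniform fraction of the total. That in turn requires using Condition~\ref{cond:vis} rather sharply, in particular a uniform control on how long and how often a bicharacteristic can reside in $\overline\Dinner$ before time $T$. By contrast, the well-posedness of the backward (antidamped) problem, the extension estimate for $\widehat g$, the localization to a bounded domain, and the compactness--uniqueness step are all routine.
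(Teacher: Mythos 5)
Your reduction is sound as far as it goes: the (anti)damped energy identity gives
\begin{equation*}
\norm{\fini}_{\Hini}^2 \;\le\; e^{2AT}\Bigl(\norm{\vu(\cdot,T)}_{\Hu(\Dinner)}^2+\norm{\vu(\cdot,T)}_{\Hu(\Dinner^c)}^2\Bigr),
\qquad A=\max_x c^2(x)a(x),
\end{equation*}
so everything hinges on controlling the interior final-time energy, and that is exactly where the paper's proof also does its work. But you stop at the decisive step: you yourself flag the contraction estimate $\norm{\Ko}<1$ as ``the genuine obstacle'' and only describe what a proof would have to accomplish, without giving one. That is a genuine gap. Moreover, the route you sketch for closing it --- a quantitative statement that a uniform fraction $\theta<1$ of the energy has left $\Dinner$ by time $T$, extracted from geodesic residence times --- is not the right one. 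Condition~\ref{cond:vis} is a statement about bicharacteristics and only governs the singular (high-frequency) part of the solution; smooth, low-frequency initial data can retain essentially all of its energy inside $\Dinner$ at time $T$, so no uniform escaping fraction follows from visibility alone. And even granting such a $\theta$, your error bound $\norm{\Ko\fini}_{\Hini}\le C\norm{\vu(\cdot,T)}_{\Hu(\Dinner)}$ carries the antidamping constant $C\sim e^{AT}$, so you would need $\theta<e^{-2AT}$, which no geometric argument will deliver uniformly in $\fini$.

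What the paper does at this step is purely qualitative. It forms $\VVo_T\fini=\bigl(u(\cdot,T)|_{\Dinner}-\phi,\ \partial_t u(\cdot,T)|_{\Dinner}\bigr)$, where $\phi$ is the harmonic extension of the boundary trace (so $\|\nabla\phi\|_{L^2(\Dinner)}$ is controlled by the exterior data via the trace inequality), and uses that under Condition~\ref{cond:vis} the function $\VVo_T\fini$ is \emph{smooth}, hence $\VVo_T$ is a \emph{compact} operator on $\Hini$. Compactness then gives $\norm{\VVo_T\fini}_{\Hini}<e^{-2AT}\norm{\fini}_{\Hini}$ outside a finite-dimensional subspace, which is precisely enough to absorb the interior term into the left-hand side; on the finite-dimensional exceptional subspace, the injectivity of $\UUo_T$ from Theorem~\ref{T:uni} yields the bound for free. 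In other words, the compactness--uniqueness device you mention only in passing (``possibly up to a lower-order term'') is in fact the \emph{entire} mechanism: the whole interior contribution, after the harmonic correction, is the compact remainder, and no uniform energy-decay fraction is needed or used. To complete your argument you would have to replace the unproven contraction claim by this compactness statement (or invoke the microlocal results of \cite{palacios2016reconstruction,palaciospartial}); once that is done, your time-reversal/Neumann-series packaging and the paper's direct absorption argument become essentially equivalent.
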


\begin{proof} Application  of $\partial_t u$    to \eqref{eq:wave1} and  integration over $\R^d$ yields 
$\frac{1}{2} \frac{d}{dt} \int_{\R^d} \bigl[ c^{-2}  \abs{\partial_t u(\cdot ,t)}^2 +  \abs{ \nabla u (\cdot ,t)}^2  + a  \abs{\partial_t u(\cdot,t)}^2 \bigr] =0
$.  With the constant $A \coloneqq  \max \set{ c^{2}(x) a(x) \mid x \in \R^d}$ we get 
$\frac{d}{dt} \int_{\R^d} \bigl[  c^{-2} \abs{\partial_t u(\cdot,t)}^2 +  \abs{ \nabla u (\cdot,t)}^2 +a  \abs{\partial_t u(\cdot,t)}^2 \bigr]   \geq 0$, and thus
\begin{equation*}  \forall t \in [0,T] \colon \quad 
 \frac{d}{dt} \int_{\R^d} e^{2At} \bigl[c^{-2}  \abs{\partial_t u(\cdot,t)}^2 +  \abs{ \nabla u (\cdot,t)}^2\bigr]  \geq 0 \,. 
 \end{equation*}
Integration over $[0,T]$, using the  initial conditions  \eqref{eq:wave2}, \eqref{eq:wave3} and the definition of the norm $\norm{\cdot}_{\Hini}$ yield        
\begin{equation} \label{eq:stab-aux} 
\norm{\fini}^2_{\Hini} \leq e^{2AT} \int_{\R^n} \bigl[c^{-2}  \abs{\partial_t u(\cdot,T)}^2 +  \abs{ \nabla u (\cdot,T)}^2\bigr]     \,. 
\end{equation}
Next  consider the operator $\VVo_T \colon \Hini \to \Hini$  defined by $ \VVo_T \fini \coloneqq ( u(\cdot,T)|_\Omega - \phi, \partial_t u(\cdot,T)|_\Omega)$, where $\phi$ is the harmonic extension of $u(\cdot,T)|_{\partial \Dinner}$ to $\Omega$.  Under the visiblility condition, the function $\VVo_T(\fini)$ is smooth for all $\fini \in \Hini$ and thus $\VVo_T$ is a compact operator.  Therefore, the space $V$ defined as the set of all $\fini \in  \Hini$ with  $\norm{ \VVo_T(\fini) }_{\Hini} \geq e^{-2AT} \norm{ \fini }_{\Hini} $ has finite dimension.  

Using  \eqref{eq:stab-aux} and the trace inequality, for all $\fini \in V^\perp$,  we get    
\begin{align*}
\norm{\fini}^2_{\Hini} 
&\leq e^{2AT} \bigl(\norm{ \VVo_T(\fini) }^2_{\Hini} + \|\nabla \phi\|^2_{L^2(\Dinner)} + \norm{\vu(\cdot,T)}_{\Hu(\Dinner^c)}^2 \bigr) 
\\
& \leq e^{-2AT} \norm{\fini}^2_{\Hini}  + e^{2AT} \bigl(\|\nabla \phi\|^2_{L^2(\Dinner)} + \norm{\vu(\cdot,T)}^2_{\Hu(\Dinner^c)}\bigr) 
\\
& \leq e^{-2AT} \norm{\fini}^2_{\Hini}  + e^{2AT} \bigl( C \norm{\vu(\cdot,T)}_{\Hu(\Dinner^c)}^2 + \norm{\vu(\cdot,T)}^2_{\Hu(\Dinner^c)}\bigr) \,, 
\end{align*}
for some generic constant $C>0$, which implies 
\begin{equation} \label{E:Vperp} 
\forall \fini \in V^\perp \colon \quad  
\norm{\fini}^2_{\Hini} \leq  \frac{(C+1)e^{2AT}}{1-e^{-2AT}} \norm{\vu(\cdot,T)}_{\Hu(\Dinner^c)}^2   \,.
\end{equation}
Since $\UUo_T$  is injective on $V$ and $V$ is  finite dimensional space,  we have $\forall \fini \in V \colon \norm{\fini}^2_{\Hini} \leq C\norm{\vu(\cdot,T)}^2_{\Hu(\Dinner^c)}$. Together with \eqref{E:Vperp} this shows \eqref{eq:stab}.
\end{proof}

\begin{lemma} \label{lemma:energy} In the situation of Lemma \ref{lemma:stability} we have $ \norm{\vu(\cdot,T)}_{\Hu(\Dinner^c)} \leq C \|u(\cdot,T)\|_{H^1(\Dinner^c)}$.
\end{lemma}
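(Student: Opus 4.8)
The assertion reduces to a single bound, namely $\|\partial_t u(\cdot,T)\|_{L^2(\Dinner^c)}\le C\,\|u(\cdot,T)\|_{H^1(\Dinner^c)}$, since the remaining contribution $\|\nabla u(\cdot,T)\|_{L^2(\Dinner^c)}$ to $\|\vu(\cdot,T)\|_{\Hu(\Dinner^c)}$ is already dominated by $\|u(\cdot,T)\|_{H^1(\Dinner^c)}$. The plan is to exploit three structural features of the solution on the exterior region $\Dinner^c$. First, because $a$ and $c-1$ are supported in $\Dinner$, the function $u$ solves the free wave equation $\partial_{tt}u-\Delta u=0$ on $\Dinner^c\times[0,T]$. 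Second, because $\fini$ is supported in $\Dinner$, the Cauchy data $u(\cdot,0)$ and $\partial_t u(\cdot,0)$ vanish identically on $\Dinner^c$. Third, by finite speed of propagation the support of $u(\cdot,t)$ stays, for all $t\in[0,T]$, inside a fixed bounded neighbourhood of $\Dinner$ contained in the open set $\Douter$, so $u$ and all its derivatives vanish near $\partial\Douter$. Together these say that $u|_{\Dinner^c}$ is the \emph{purely outgoing} field radiated by the vibration of $\partial\Dinner$: the solution of the exterior mixed problem for the free wave equation with vanishing initial data.

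The core of the argument is that, for a purely outgoing solution, the time derivative and the spatial gradient are comparable on any region that stays at positive distance from the source. Microlocally this holds because the wavefront set of $u$ over $\Dinner^c\times(0,T)$ lies in the outgoing half of the characteristic variety $\{\tau^2=|\xi|^2\}$, so on the slice $t=T$ the operator $\partial_t$ acts on $u(\cdot,T)$ as $\mp\sqrt{-\Delta}$ modulo a smoothing remainder, which yields $\|\partial_t u(\cdot,T)\|_{L^2(\Dinner^c)}\le C\,\|u(\cdot,T)\|_{H^1(\Dinner^c)}$; a quantitative estimate of precisely this type is available from \cite{palacios2016reconstruction,palaciospartial}. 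I would, however, prefer a more elementary route: integrating $\partial_{tt}u=\Delta u$ in time and using $\partial_t u(\cdot,0)=0$ on $\Dinner^c$ gives the identity $\partial_t u(\cdot,T)=\Delta\!\int_0^T u(\cdot,s)\,ds$ there; moreover $w(\cdot,t):=\int_0^t u(\cdot,s)\,ds$ is again an outgoing free-wave field on $\Dinner^c$ with vanishing Cauchy data and $\partial_t w(\cdot,T)=u(\cdot,T)$, so the outgoing structure of $w$ lets one control $\nabla(w(\cdot,T))$ — and, after one more differentiation, $\Delta(w(\cdot,T))=\partial_t u(\cdot,T)$ — by $u(\cdot,T)$ together with $\nabla u(\cdot,T)$. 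In one space dimension this degenerates to the exact pointwise relation $\partial_t u=\pm\partial_r u$ on $\Dinner^c$, making the estimate transparent; in higher dimensions the lower-order correction terms in such relations must be tracked, and it is at this point that one invokes either the microlocal estimate above or the explicit Kirchhoff/Poisson representation of the exterior field.

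I expect this comparison of $\partial_t u$ with $\nabla u$ on $\Dinner^c$ to be the genuine obstacle, because no energy-conservation argument can produce it by itself: multiplying $\partial_{tt}u-\Delta u=0$ by $\partial_t u$ and integrating over $\Dinner^c\times[0,T]$ only gives the identity $\tfrac12\,\|\vu(\cdot,T)\|^2_{\Hu(\Dinner^c)}=-\int_0^T\!\!\int_{\partial\Dinner}(\partial_\nu u)(\partial_t u)\,dS\,dt$ (with $\nu$ the outer normal of $\Dinner$), which merely re-expresses the exterior energy at time $T$ through the energy flux across $\partial\Dinner$ and yields no one-sided bound by the final-time exterior $H^1$ norm. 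The directional information carried by the outgoing field $u|_{\Dinner^c}$ is essential; it is exactly what separates admissible final-time exterior data from arbitrary $H^1$ data. Finally, the constant $C$ is independent of $\fini$, since each ingredient — the finite-speed support bound, the time integration, and the microlocal (or representation) estimate — depends only on the fixed geometry of $\Dinner\subseteq\Douter$ and on $c,a,T$.
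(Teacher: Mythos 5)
Your overall strategy is the right one and coincides with the paper's: reduce the claim to the bound $\|\partial_t u(\cdot,T)\|_{L^2(\Dinner^c)}\le C\|u(\cdot,T)\|_{H^1(\Dinner^c)}$ and argue that, microlocally on $\Dinner^c$, $\partial_t u(\cdot,T)$ is obtained from $u(\cdot,T)$ by an elliptic pseudo-differential operator of order one. The problem is that the step on which everything rests --- that $u$ is ``purely outgoing'' over $\Dinner^c\times(0,T)$, i.e.\ that its wavefront set lies in only one half of the characteristic variety --- is asserted rather than proved, and it does not follow from the three structural features you list (free equation on $\Dinner^c$, vanishing exterior Cauchy data, finite speed of propagation). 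At a covector $(x_0,\xi_0)\in T^*\Dinner^c$ the solution at time $T$ splits into two components propagating in opposite directions along the bicharacteristic through $(x_0,\xi_0)$; these trace back to the initial data at the two points $x_+$ and $x_-$ lying at geodesic distance $T$ from $x_0$ on either side. Both components are singular whenever both $x_+$ and $x_-$ lie in $\supp\fini\subseteq\Dinner$, which can perfectly well happen (for a non-convex $\Dinner$ with $x_0$ in a notch, or for a sound speed that bends geodesics back into $\Dinner$) unless $T$ is large enough. When both components are present, $\partial_t u(\cdot,T)$ is \emph{not} determined by $u(\cdot,T)$ modulo smoothing: the two components enter $u(\cdot,T)$ with equal weights but enter $\partial_t u(\cdot,T)$ with opposite signs, so the desired estimate genuinely fails microlocally. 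The missing ingredient is the visibility Condition~\ref{cond:vis}, which you never invoke at this point. The paper's proof uses precisely the fact that $x_+$ and $x_-$ are joined by a geodesic of length $2T$ in the metric $c^{-2}(x)\,dx^2$, so that at least one of them lies outside $\Dinner$, where $\fini$ vanishes; hence one of the two components is smooth, and the relation $\partial_t u(\cdot,T)\simeq \Ao\, u(\cdot,T)$ with $\Ao$ of order one is then read off from the parametrix amplitudes $a_{1,\pm}=1/2$ and $a_{2,\pm}=\pm 1/(2c(x)|\xi|)$ at $t=T$.

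Your proposed ``elementary route'' does not close this gap. The identity $\partial_t u(\cdot,T)=\Delta\int_0^T u(\cdot,s)\,ds$ on $\Dinner^c$ is correct, but to bound $\Delta w(\cdot,T)$ (two spatial derivatives of $w$) by $\partial_t w(\cdot,T)=u(\cdot,T)$ and $\nabla u(\cdot,T)$ you need exactly the one-sided microlocal relation you are trying to establish, and you concede that for $d\ge2$ the argument falls back on the microlocal estimate or an unspecified representation formula. Likewise, pointing to \cite{palacios2016reconstruction,palaciospartial} for ``an estimate of precisely this type'' is not a proof unless you check that their hypotheses (in particular the non-trapping/visibility assumptions) are what makes the estimate true. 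Your observation that no energy identity can yield the bound is correct and well taken, and the initial reduction is fine; but the one claim that carries all the content of the lemma is left unproved, and as stated it is false without the visibility condition.
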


\begin{proof}
Let $\data = u(\cdot,T)|_{\Dinner^c}$ and  $h = \partial_t u(\cdot,T)|_{\Dinner^c}$. We will prove that microlocally $h = \Ao(\data)$, where $\Ao$ is a pseudo-differential operator of order $1$. Indeed, let  $(x_0,\xi_0) \in T^*\Dinner^c$. Then near $(x_0,T)$ (see \cite{SUBrain}) we  have   
\begin{equation*}
	u(x,t) \simeq (2\pi)^{-3} \sum_{\sg=\pm} \int_{\R^d} e^{i\phi_\sg(x,t,\xi)} [a_{1,\sg}(x,t,\xi) \hat{g}(\xi) + a_{2,\sg}(x,t,\xi) \hat{h}(\xi)] d\xi = \sum_{\sigma} u_\sigma(x) \,,
\end{equation*}
 where $\phi_{\sg}(x,T,\xi) =  x \cdot \xi$, $a_{1,\sg}(x,T,\xi) = 1/2$ and $ a_{2,\sg}(x,T,\xi) = \sg/(2 c(x) |\xi|)$. 
Let $(x(t), \xi(t))$ be the bicharacteristic passing through $(x_0,\xi_0) \in T^* \Dinner$ at $t = T$, where $x(t)$ is a unit speed geodesics in the metric $c^{-2}(x)\,dx^2$. Further denote $(x_+,\xi_+) = (x(0),\xi(0))$ and  $(x_-,\xi_-)  = (x(2T),\xi(2T))$. Then, the singularity of $u_\sigma$ at $(x_0,\xi_0)$, if exists, is generated by that of $\fini = (\ini_1, \ini_2)$ at $(x_\sigma,\xi_\sigma)$. Since the geodesic distance between $x_+$ and $x_-$ is $2T$, at least one of them is outside of $\Dinner$. Without loss of generality, we assume that $x_- \notin \Dinner$. Then $\fini = 0$ near $x_-$. Therefore, $u_- \simeq 0$ along the bicharacteristic $(x(t),\xi(t))$. In particular, for $t=T$, we have   $ \int_{\R^d} e^{i x\cdot \xi } [\hat{g}(\xi) - \hat{h}(\xi)/(c(x) |\xi|) ] d\xi \simeq 0 $
or  $ g(x) \simeq (2\pi)^{-3} \int_{\R^d} e^{i x\cdot \xi }   \hat{h}(\xi)/(c(x) |\xi|) d\xi$.
By inverting the above elliptic pseudo-differential operator, we have, up to lower order terms, $	h(x) \simeq (2\pi)^{-3} \int_{\R^d} e^{i x\cdot \xi } c(x) |\xi| \hat{g}(\xi) d\xi$.
That is, microlocally on $\Dinner^c$, $h = \Ao(h)$ where   $\Ao$ is a pseudo-differential operator of order $1$ and therefore  
$\|h\|_{L^2(\Dinner^c)} \leq C\|g\|_{H^1(\Dinner^c)} \leq C\norm{\data}_{\Hfinal}$.
 \end{proof}

Combining Lemmas~\ref{lemma:stability} and ~\ref{lemma:energy} we obtain the following  stability result.

\begin{theorem}[Stability of full field wave inversion]\label{thm:stability} 
If the visibility Condition~\ref{cond:vis} holds, then there exists a constant $C>0$, such that  
\begin{equation} \label{eq:stability} 
\forall \fini  \in \Hini \colon \quad 
\norm{\fini}_{\Hini} \leq C \norm{\UUo_T(\fini)}_{\Hfinal} \,.
\end{equation}
\end{theorem}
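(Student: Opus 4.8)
The plan is to read off \eqref{eq:stability} as a direct consequence of the two preceding lemmas together with a Poincar\'e-type inequality, so that the argument is essentially a bookkeeping of constants. Note first that the visibility Condition~\ref{cond:vis} in particular forces $T > \diam\Dinner$, so $\UUo_T$ is injective by Theorem~\ref{T:uni}; this is exactly what renders the finite-dimensional space $V$ appearing in the proof of Lemma~\ref{lemma:stability} harmless and legitimizes invoking that lemma.

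First I would record that for $\fini \in \Hini$ the function $\data \coloneqq \UUo_T\fini = u(\cdot,T)|_{\Dinner^c}$ belongs to $\Hfinal$, and hence has support contained in $\Douter \setminus \Dinner$. Extending $\data$ by zero to the bounded domain $\Douter$ and applying the Poincar\'e inequality yields $\|\data\|_{L^2(\Dinner^c)} \leq C_1 \|\nabla \data\|_{L^2(\Dinner^c)}$, so that $\|u(\cdot,T)\|_{H^1(\Dinner^c)} \leq C_2 \norm{\data}_{\Hfinal}$ with $C_2$ depending only on $\Douter$.

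Next I would apply Lemma~\ref{lemma:energy}, valid under the visibility condition, to obtain $\norm{\vu(\cdot,T)}_{\Hu(\Dinner^c)} \leq C_3 \|u(\cdot,T)\|_{H^1(\Dinner^c)}$ with $\vu = (u,\partial_t u)$, and then Lemma~\ref{lemma:stability} in the form $\norm{\fini}_{\Hini} \leq C_4 \norm{\vu(\cdot,T)}_{\Hu(\Dinner^c)}$ as established in its proof. Concatenating these three estimates gives $\norm{\fini}_{\Hini} \leq C_2 C_3 C_4 \norm{\UUo_T\fini}_{\Hfinal}$, which is \eqref{eq:stability} with $C \coloneqq C_2 C_3 C_4$.

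Since the substantive work has already been carried out in Lemmas~\ref{lemma:stability} and~\ref{lemma:energy} — the compactness/injectivity argument and the parametrix computation, respectively — there is essentially no remaining obstacle here. The only two points deserving care are (i) that $\norm{\cdot}_{\Hfinal}$, which only measures the gradient, is equivalent to the full $H^1(\Dinner^c)$-norm on $\Hfinal$, which is precisely where the support constraint in the definition of $\Hfinal$ together with the boundedness of $\Douter$ enter via Poincar\'e; and (ii) that the chained estimates compose linearly, which they do since the proof of Lemma~\ref{lemma:stability} in fact delivers the bound $\norm{\fini}_{\Hini} \leq C \norm{\vu(\cdot,T)}_{\Hu(\Dinner^c)}$.
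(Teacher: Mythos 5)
Your proposal is correct and follows essentially the same route as the paper, which simply concatenates Lemmas~\ref{lemma:stability} and~\ref{lemma:energy} (reading the former, as you do, with the homogeneous scaling established in its proof). Your Poincar\'e step making explicit the equivalence of $\norm{\cdot}_{\Hfinal}$ with the full $H^1(\Dinner^c)$-norm is a detail the paper leaves implicit, being tacitly used already in the final inequality of the proof of Lemma~\ref{lemma:energy}.
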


\begin{proof}
Let $u$ be the solution of \eqref{eq:wave1}-\eqref{eq:wave3} with initial data $\fini \in \Hini$. Then $\UUo_T(\fini) = u(\cdot, T)$ and combining Lemmas~\ref{lemma:stability} and ~\ref{lemma:energy}  gives estimate \eqref{eq:stability}.  
\end{proof}

\subsection{Iterative time-reversal}

Next we consider  the iterative  solution of the full field PAT problem  $ \data = \UUo_T \fini$ based on the time reversed wave equation
\begin{align}  
\label{eq:timerev1}
& [ c^{-2}(x)\partial_t^2 - a(x)\partial_t-\Delta] v\,(x,t) = 0 \quad (x,t)\in\R^d\times (0,\infty)
\\ \label{eq:timerev2}
& v(x,T)   = h(x) \quad x \in\R^d
\\ \label{eq:timerev3}
& \partial_t v(x,T) = 0  \quad x \in \R^d \,  
\end{align} 
and the associated time reversal  $\timereversal_T h \coloneqq  (v(\cdot,0), \partial_t v(\cdot,0))$.  
Let us further denote by      
\begin{align*}
&\Eo \colon \Hfinal \to  H^1(\Douter)
\\
& \Po \times \Qo \colon H^1_0(\R^d) \times L^2(\R^d,c^{-2}) \to \Hini
\end{align*}
denote the extension operator from $\Hfinal = H^1(\Dinner^c)$ to $H^1_0(\Douter)$ and  the  orthogonal projection of $H^1_0(\Dinner) \times L^2(\Omega, c^{-2})$  onto $\Hini = H^1_0(\Dinner) \times L^2(\Omega, c^{-2})$, respectively. 

\begin{remark}[Extension and projection operators]
The extension operator $\Eo$ is require in order to obtain  proper initial data   for the time reversed equation based on  elements in data space $\Hfinal$ whereas $\Po$ is required to map the time reverses back to the space where the initial data lives.  The extension operator applied to $g\in \Hfinal$ is given by  $\Eo(g)|_{\Omega^c} = g$  and $\Eo(g)|_{\Omega}=\phi $ where $\phi$ satisfies the  Dirichlet problem
\begin{equation}\label{eq:laplace}
\left\{ \begin{aligned}
    &\Delta \phi = 0   && \text { in }   \Omega \\ 
     &\phi =  g|_{\partial \Omega} && \text { on }   \partial \Omega \,.
\end{aligned}
\right. 
\end{equation}
Here, $g |_{\partial \Omega} \in H^{1/2} (\partial \Omega) $ denotes the trace of $g \in H^1(\Omega^c)$ on $\partial \Omega$.  Note that the Dirichlet interior problem \eqref{eq:laplace} has a unique solution $\phi \in H^1(\Omega)$ (see, for example, \cite{mclean2000strongly}).  Further, the orthogonal projection $\Po \times \Qo$  is  given by   $(\Po \times \Qo)(g, h) =  (\Po h, \Qo h) = (g|_\Omega - \phi, h|_\Omega)$, where $\phi \in H^1(\Omega)$ is the solution of  \eqref{eq:laplace}.   
\end{remark}

Iterative time reversal is then defined  as  Neumann series   $ \sum_{j \in \N} (\id - \la \VVo_T \UUo_T)^j$ where  
\begin{equation} \label{eq:timereversal}
	\VVo_T  \coloneqq (\Po \times \Qo )   \timereversal_T    \Eo \colon  \Hfinal \to \Hini \,.
\end{equation}
The key of iterative time reversal is to show that $\norm{\fixT_\la} < 1 $, which we will show next.

\begin{theorem} \label{thm:timerev}
Suppose $T > T_0/2 $ and consider for any $\lambda \in (0,2]$ the operator $\fixT_\la  \coloneqq \id - \la \VVo_T \UUo_T\colon  \Hini \to \Hini $.  Then the following hold: 
\begin{enumerate}[topsep=0em, itemsep=0em,label=(\alph*)] 
\item  \label{timerev1} $\fixT_2$ satisfies $\forall \fini \in \Hini \setminus\set{0}  
              \colon \|\fixT_2 \f\|_{\Hini} < \|f\|_{\Hini}$. 
\item  \label{timerev2} If $\lambda \in (0,2)$, then  $\|\fixT_\la\| < 1$. 
\end{enumerate}
\end{theorem}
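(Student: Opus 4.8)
The plan is to establish an energy identity for the time-reversed equation analogous to the one used in Lemma~\ref{lemma:stability}, and to exploit the fact that $\VVo_T$ is essentially a regularized adjoint of $\UUo_T$ with respect to the energy inner product. First I would compute, for $\fini \in \Hini$ with solution $u$ of \eqref{eq:wave1}-\eqref{eq:wave3}, the energy of $\fixT_2 \fini$. Writing $v$ for the solution of the time-reversed equation \eqref{eq:timerev1}-\eqref{eq:timerev3} with terminal data $h = \Eo(u(\cdot,T))$, and noting that on $\Omega$ the damping term flips sign so that $v$ satisfies exactly the adjoint dynamics, I would track the energy functional $\mathcal{E}(t) = \int_{\R^d} [c^{-2}|\partial_t w|^2 + |\nabla w|^2]$ along both flows. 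The key computation is a Green-type identity: pairing the equation for $u$ against $\partial_t v$ and the equation for $v$ against $\partial_t u$, integrating over $\Omega \times [0,T]$, and using integration by parts in $t$ together with the boundary terms on $\partial\Omega$ coming from the spatial Laplacian. Because $a \geq 0$ and is supported in $\Omega$, the damping contributes a sign-definite term; the boundary terms on $\partial\Omega$ are controlled precisely by the harmonic-extension construction built into $\Eo$ and $\Po\times\Qo$, which is why those operators were introduced.

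The strategy for part~\ref{timerev1} is then to show that $\langle \fini, \VVo_T\UUo_T \fini\rangle_{\Hini} = \langle \fini, \fini\rangle_{\Hini} - (\text{nonnegative boundary/exterior energy term})$, or equivalently that $\fixT_2 = \id - 2\VVo_T\UUo_T$ is the "time-reversal defect" operator whose energy is strictly contracted whenever the exterior data $\UUo_T\fini$ is nonzero. Concretely, I expect to arrive at an identity of the shape
\begin{equation*}
\norm{\fixT_2 \fini}_{\Hini}^2 = \norm{\fini}_{\Hini}^2 - (\text{nonneg.\ term involving } \norm{\vu(\cdot,T)}_{\Hu(\Dinner^c)} \text{ and the damping}),
\end{equation*}
where the subtracted term vanishes only if $\vu(\cdot,T)|_{\Dinner^c} = 0$. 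By Lemma~\ref{lem:uni}\ref{it:uni1} and Theorem~\ref{T:uni}, using $T > \diam(\Dinner)$ (which follows from $T > T_0/2$ with $T_0$ the relevant geodesic diameter), $\vu(\cdot,T)|_{\Dinner^c} = 0$ forces $\fini = 0$; hence the subtracted term is strictly positive for $\fini \neq 0$, giving $\norm{\fixT_2\fini}_{\Hini} < \norm{\fini}_{\Hini}$.

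For part~\ref{timerev2}, I would combine the contraction from \ref{timerev1} with the stability estimate of Theorem~\ref{thm:stability}. Writing $\fixT_\la = (1-\la/2)\id + (\la/2)\fixT_2$ for $\la \in (0,2)$, a convexity argument gives $\norm{\fixT_\la \fini}_{\Hini} \leq (1-\la/2)\norm{\fini}_{\Hini} + (\la/2)\norm{\fixT_2\fini}_{\Hini}$; the issue is that \ref{timerev1} only gives strict inequality pointwise, not a uniform bound $<1$. To upgrade to a uniform operator-norm bound I would use the quantitative form of the energy identity: the subtracted term dominates a multiple of $\norm{\UUo_T\fini}_{\Hfinal}^2$, which by Theorem~\ref{thm:stability} dominates a multiple of $\norm{\fini}_{\Hini}^2$, yielding $\norm{\fixT_2\fini}_{\Hini}^2 \leq (1-\kappa)\norm{\fini}_{\Hini}^2$ for some $\kappa > 0$ and hence $\norm{\fixT_2} \leq \sqrt{1-\kappa} < 1$; then $\norm{\fixT_\la} \leq (1-\la/2) + (\la/2)\sqrt{1-\kappa} < 1$ for all $\la \in (0,2)$, and one checks the endpoint $\la = 2$ separately is exactly \ref{timerev1}. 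The main obstacle I anticipate is bookkeeping the boundary terms on $\partial\Omega$ in the Green identity: one must verify that the harmonic extension in $\Eo$ and the complementary projection $\Po\times\Qo$ are matched so that all $\partial\Omega$ contributions either cancel or reassemble into the exterior energy $\norm{\vu(\cdot,T)}_{\Hu(\Dinner^c)}^2$, with no residual indefinite term; getting the signs right there, rather than any deep analytic input, is where the proof can go wrong.
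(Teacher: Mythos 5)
Your plan for part~\ref{timerev1} matches the paper's argument in structure: compare the energy of $w=u-v$ at times $0$ and $T$, use the harmonic extension and the orthogonality built into $\Po\times\Qo$ to get $\norm{\fixT_2\fini}_{\Hini}\le\norm{\fini}_{\Hini}$, and settle the equality case by the uniqueness results. One caveat: the nonnegative slack is not $\norm{\vu(\cdot,T)}^2_{\Hu(\Dinner^c)}$. What the energy balance actually produces is the damping loss $2\int_0^T\int_{\R^d} a\,|\partial_t v|^2$, the exterior energy of the \emph{time-reversed} solution at $t=0$, and the projection defect. The equality case therefore yields $\partial_t v(\cdot,0)=\ini_2$ on $\R^d$ and $\nabla v(\cdot,0)=0$ on $\Dinner^c$, after which one must apply Lemma~\ref{lem:uni} to the reversed flow $t\mapsto v(\cdot,T-t)$ to conclude $\bar g=0$ before Theorem~\ref{T:uni} finishes; this is the same kind of argument you describe, but it is not literally ``the slack vanishes only if $\vu(\cdot,T)|_{\Dinner^c}=0$.''

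The genuine gap is in part~\ref{timerev2}. You propose to upgrade \ref{timerev1} to a uniform bound $\norm{\fixT_2}\le\sqrt{1-\kappa}<1$ by arguing that the subtracted term dominates a multiple of $\norm{\UUo_T\fini}^2_{\Hfinal}$ and then invoking Theorem~\ref{thm:stability}. That domination fails precisely at $\la=2$: with terminal data $2\bar g$ the terminal energy identity collapses to $\en_w(T)=\en_u(T)$, because $2\bar g-g=g$ on $\Dinner^c$ and $\int_{\Dinner}\sabs{2\nabla\bar g-\nabla g}^2=\int_{\Dinner}\sabs{\nabla g}^2$ by harmonicity of $\bar g$. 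So the only slack for $\fixT_2$ is the damping term plus projection defects, and there is no available lower bound for these in terms of $\norm{\UUo_T\fini}_{\Hfinal}$; if there were, the theorem would assert $\norm{\fixT_2}<1$, which it deliberately does not. The paper's route is to run the quantitative estimate for $\la=1$, i.e.\ with terminal data $\bar g$ rather than $2\bar g$: there one gets $\en_w(T)=\en_u(\cdot,T)-\norm{\UUo_T\fini}^2_{H^1_0(\Dinner^c)}$, so the exterior data energy appears explicitly as slack and Theorem~\ref{thm:stability} gives $\norm{\fixT_1}^2\le 1-1/C^2$. General $\la\in(0,2)$ then follows from $\fixT_\la=(1-\la)\id+\la\fixT_1$ for $\la\in(0,1)$ and $\fixT_\la=(\la-1)\fixT_2+(2-\la)\fixT_1$ for $\la\in(1,2)$, using $\norm{\fixT_1}<1$ and $\norm{\fixT_2}\le1$. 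Your convex-combination identity $\fixT_\la=(1-\la/2)\id+(\la/2)\fixT_2$ is algebraically correct, but it routes everything through the one operator for which no uniform contraction is obtainable.
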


\begin{proof} \mbox{}
\ref{timerev1}: Let $v$ solve the time-reversed wave equation   \eqref{eq:timerev1}-\eqref{eq:timerev3} with  initial data $h = 2 \cdot (\bar g)$ where $\bar g \coloneqq \Eo g$. Then $w \coloneqq u - v$ satisfies the wave equation 
$c^{-2}\,\partial_t^2 w = - \Delta w - a\,\partial_t (u+v)$ and the corresponding energies at times $0$ and $T$ 
respectively satisfy  
\begin{align}
   \en_w(0) &= 
       \int_{\R^n}  \left[ c^{-2}(x)\,| \ini_2(x) - \partial_t v (x,0)|^2
              + \sabs{ \nabla \ini_1(x) - \nabla  v(x,0)  }^2 \right] \,\mathrm{d} x,  \nonumber \\  \label{eq:ene2}
   \en_w(T) &= 
       \int_{\R^n}  \left[ c^{-2}(x)\,|\partial_t u (x,T)|^2 
              + \sabs{ \nabla g (x) - 2 \nabla \bar g (x) }^2 \right] \,\mathrm{d} x  \,.
\end{align}
The trace extension  satsfies   $\bar g |_{\partial \Dinner} = g |_{\partial \Dinner} $ and 
$(\Delta \bar g) |_{\Dinner} =0$. Therefore   
\begin{equation*}
       \int_{\Dinner} (\sabs{2 \nabla \bar g - \nabla g }^2 - \abs{\nabla g}^2) \, \mathrm{d} x
          = 4  \int_{\Dinner} \inner{\nabla \bar g  }{ \nabla ( \bar g- g) } \,\mathrm{d} x
           = - 4 \int_{\Dinner} \Delta \bar g \, ( \bar g  -\data)  \, \mathrm{d} x
          = 0 \,. 
\end{equation*} 
We obtain $ \int_{\Dinner} \sabs{2 \nabla \bar g - \nabla g }^2 \, \mathrm{d} x 
= \int_{\Dinner} \abs{\nabla u(x,T)}^2 \, \mathrm{d} x$ and from \eqref{eq:ene2}, we deduce $\en_w(T) = \en_u(\cdot,T)$. 
We recall that $a$ is a positive function. From
$$
   \frac{\mathrm{d} \en_w}{\mathrm{d} t} 
       = 2\,\int_{\R^n} \left[ c^{-2}(x)\,[\partial_t w]\,[\partial_t^2 w] 
                 + [\nabla w]\,[\nabla \partial_t w]\right] \,\mathrm{d} x 
       = - 2\,\int_{\R^n} a\,\left( [\partial_t u]^2 -  [\partial_t v]^2 \right) \,\mathrm{d} x 
$$
together with 
\begin{align*}
   \en_u(\cdot,T) - \en_u(0) 
    & = - 2\,\int_0^T \int_{\R^n} a(x)\,[\partial u]^2(x,s)\,\,\mathrm{d} x \,\mathrm{d} s =: -C_u < 0
\\   \en_v(T) - \en_v(0)
& = 2\,\int_0^T \int_{\R^n} a(x)\,[\partial v]^2(x,s)\,\,\mathrm{d} x \,\mathrm{d} s =: C_v > 0
\end{align*}
we infer  $ \en_w(T) - \en_w(0) = \en_u(\cdot,T) - \en_u(0) + C_v \geq \en_u(\cdot,T) - \en_u(0)$.  With  $\en_w(T) = \en_u(\cdot,T)$ we obtain  $\en_u(0) \geq \en_w(0)$ and therefore 
\begin{multline}\label{intrelf}
       \int_{\R^n} \left[ c^{-2}(x)\,\ini_2^2(x) + \abs{ \nabla \ini_1(x) }^2 \right] \,\mathrm{d} x  \\
           \geq \int_{\R^n}  \left[ c^{-2}(x) \, \abs{ \ini_2(x) - \partial_t v  (x,0)}^2 
                         + \abs{\nabla  v(x,0) - \nabla \ini_1(x) } ^2 \right] \,\mathrm{d} x \,,
\end{multline}
where we have used the explicit expressions for $\en_u(0) $ and $\en_v(0)$, respectively.

With  $\f^* \coloneqq 2  \VVo_T   \UUo_T \fini $ we have   
$\fixT_2 \fini = \fini - \f^*$.  Moreover, writing  $v_0 \coloneqq  v(\cdot,0)|_{\Dinner}$ we have  
$f^* = \Po (v_0)$  and thus  $\Delta [ v_0 - f^*] = 0$ in  $\Dinner$. From this we infer 
$ \int_{\Dinner}  \inner{   \nabla v_0 -  \nabla \ini_1^* }{ \nabla \ini_1^* - \nabla \ini_1 }  \, \mathrm{d} x 
     = -\int_{\Dinner}  \inner{ \Delta v_0 - \Delta  f^* }{ f^*  - f }  \, \mathrm{d} x  
     = 0$ and thus 
\begin{multline} \label{eq:est2}
     \int_{\Dinner}  \abs{ \nabla v_0 - \nabla \ini_1 } ^2  \, \mathrm{d} x
     \\= \int_{\Dinner}  \abs{ \nabla v_0 - \nabla \ini_1^* } ^2 \, \mathrm{d} x
     + 
     \int_{\Dinner}  \abs{ \nabla \ini_1^* - \nabla \ini_1 } ^2 \, \mathrm{d} x
     \geq  \int_{\Dinner}  \abs{ \nabla \ini_1^* - \nabla \ini_1 } ^2 \, \mathrm{d} x.
 \end{multline}
Together  with~(\ref{intrelf}) this implies 
\begin{equation*}
       \int_{\R^n} \left[ c^{-2}\,\ini_2^2 + \abs{ \nabla \ini_1 }^2 \right] \,\mathrm{d} x 
           \geq  \int_{\R^n}  c^{-2}(x) \, \abs{ \ini_2(x) - \partial_t v  (x,0)}^2   \mathrm{d} x + \int_{\Dinner}  \abs{ \nabla \ini_1^* - \nabla \ini_1 } ^2 \, \mathrm{d} x \,,
\end{equation*}
which is $\norm{\fini}_{\Hini} \geq \snorm{ \fixT_2 f }_{\Hini}$.

Next we  show that the strict inequality holds. To that end assume the above equality holds and show that 
$f$ must vanish on $\R^n$. From (\ref{intrelf}) and \eqref{eq:est2} we obtain
\begin{equation*}
       \int_{\Dinner}  \abs{ \nabla v_0 - \nabla \ini_1 } ^2  \, \mathrm{d} x 
        \geq  \int_{\R^n}  \left[ c^{-2}(x) \, \abs{ \ini_2(x) - \partial_t v  (x,0)}^2 
                         + \abs{\nabla  v(x,0) - \nabla \ini_1(x) } ^2 \right] \,\mathrm{d} x \,,
\end{equation*}
 and therefore 
 $$ \int_{\R^n}  c^{-2}(x)\,|\ini_2(x) - \partial_t v(x,0)|^2 \,\mathrm{d} x  
       + \int_{\Dinner^c}  |\nabla v(x,0)|^2 \,\mathrm{d} x  =0 \,.$$
In particular, $\ini_2 - \partial_t v (\cdot,0)$  vanishes on $\R^n$ and $\nabla v(\cdot,0)$ vanishes on $\Dinner^c$. 
Because $v(x,0)$ vanishes for $x \in  \Dinner^{(2)}_{2T}$, it follows that  $v(\cdot,0)$ vanishes 
on $\Dinner^c$.  Applying Lemma~\ref{lem:uni} for $u(\cdot,t) \coloneqq v(\cdot,T-t)$ yields 
$2 \bar g = v(\cdot,T) = u(\cdot,0) = 0$ on $\R^n$. In particular, $\UUo_T \fini = 0$ on $\Dinner^c$. 
From Theorem~\ref{T:uni}, we infer $f=0$  on $\R^n$, which concludes the proof. \\

\ref{timerev2}: Let us first consider the case $\lambda=1$.  We have to show that there exists a  constant $L < 1$ such 
that $\snorm{ \id - \VVo_T \UUo_T)   } \leq L $. To that end, let $\fini = (\ini_1, \ini_2) \in H^1_0(\Dinner)$, 
$u$ solve \eqref{eq:wave1}-\eqref{eq:wave3} with initial data $(\ini_1, \ini_2)$ and  $v$ solve  \eqref{eq:timerev1}-\eqref{eq:timerev3} with initial data 
$(h := \Eo \UUo_T \fini, 0) $. The error term $\e \coloneqq u - v$ satisfies the wave equation  $c^{-2}\,\partial_t^2 w = - \Delta w - a\,\partial_t (u+v)$ on $\R^n\times (0,T)$ and its energy at time $T$  is given by    
\begin{align*}
     \en_\e(T)  
        &=  \int_{\R^n}  \bigl[  c^{-2}(x)\,|\partial_t \e (x,T)|^2 + |\nabla \e \,(x,T)|^2 \bigr] \,\mathrm{d} x
     \\ &= \int_{\R^n}  c^{-2}(x) \abs{ \partial_t u (x,T)}^2 \,\mathrm{d} x 
     + \int_{\R^n} \abs{\nabla \bar{g}(x) - \nabla g (x) }^2  \,\mathrm{d} x \,.
\end{align*}
Here for the second equality we used $v(\cdot,T) = \bar g \coloneqq \Eo g$ and 
$\partial_t u(\cdot ,T) = 0$ and the abbreviation $g = u( \cdot ,T)$. 

The second term  in the above equation displayed satisfies  
\begin{multline*}
\int_{\R^n}  \abs{\nabla \bar{g}(x) - \nabla g (x) }^2  \,\mathrm{d} x =
\\
     \begin{aligned}
      &=      \int_{\R^n}  \inner{\nabla (g(x) - \bar{g} (x)) }{ \nabla (g(x) - \bar{g} (x)) } \,\mathrm{d} x 
     \\ 
     &=  \int_{\R^n}  \inner{\nabla (g(x) - \bar{g} (x)) }{ \nabla (g(x) + \bar{g} (x)) } \,\mathrm{d} x  
     - 2 \int_{\R^n}  \inner{\nabla (g(x) - \bar{g} (x)) }{ \nabla \bar{g} (x)  } \,\mathrm{d} x 
     \\ 
     &=  \int_{\Dinner}  \abs{\nabla g (x)}^2  \,\mathrm{d} x 
          - \int_{\Dinner} \abs{ \nabla \bar g(x)}^2 \,\mathrm{d} x  
          + 2 \int_{\R^n}  \bigl( g(x) - \bar{g} (x) \bigr) \, \Delta \bar{g} (x)  \,\mathrm{d} x 
       \\ 
       &= \int_{\Dinner}  \abs{\nabla g (x)}^2  \,\mathrm{d} x 
         - \int_{\Dinner} \abs{\nabla \bar g(x)}^2  \,\mathrm{d} x \,.
\end{aligned}
\end{multline*}
As a consequence, we obtain 
     \begin{align*}
     \en_w(T) 
        &= \int_{\R^n}  \bigl[ c^{-2}(x) \abs{ \partial_t u (x,T)}^2 
             + \abs{\nabla g(x) }^2 \bigr] \,\mathrm{d} x 
              - \int_{\Dinner^c}  \abs{\nabla g (x)}^2  \,\mathrm{d} x \, 
              \\&= 
              \int_{\R^n}  \bigl[ c^{-2}(x) \abs{\partial_t u (x,T)}^2 
               + \abs{\nabla u(x,T)}^2 \bigr] \,\mathrm{d} x 
               - \| \UUo_T \fini\|^2_{H_0^1(\Dinner^c)}  
               \\&= \en_u(\cdot,T)- \|\UUo_T \fini\|^2_{H_0^1(\Dinner^c)},
\end{align*}
which implies $\en_w(T) + \|\UUo_T \fini\|^2_{H_0^1(\Dinner^c)} = \en_u(\cdot,T)$.
Because $\en_\phi(T) = \en_\phi(0) - C_\phi$ with $C_\phi>0$ for $\phi\in\{w,u\}$ and 
$C_w - C_u = - 2\,\int_0^T \int_{\R^n} a\,[\partial_t v]^2\,\,\mathrm{d} x \,\mathrm{d} t\leq 0$, we obtain
$$
     \en_w(0) + \|\UUo_T \fini\|^2_{H_0^1(\Dinner^c)} 
        =    \en_u(0) + C_w - C_u 
        \leq \en_u(0)
        = \norm{\f}^2_{\Hini}.
$$
Using that $\en_\e(0) = \int_{\R^n}  \left[ c^{-2}(x)\,|\partial_t v(x,0)-\ini_2|^2 
+ |\nabla v(x,0) - \nabla \ini_1(x)|^2 \right] \,\mathrm{d} x$  and applying Theorem~\ref{thm:stability}, we obtain 
\begin{equation} \label{E:ineq} \int_{\R^n}  c^{-2}(x)\,|\partial_t v(x,0)-\ini_2(x)|^2  + \int_{\Dinner} \Big|\nabla v(x,0) - \nabla \ini_1(x)\Big|^2 \,\mathrm{d} x
  \leq \left( 1-\frac{1}{C^2} \right) \norm{\f}^2_{\Hini}.\end{equation}
The left hand side in the above equation can be estimated as  
\begin{align*}  
    \int_{\Dinner} \sabs{ \nabla v(x,0) - \nabla \ini_1(x)}^2 \,\mathrm{d} x  
       & = \int_{\Dinner} \sabs{ \nabla (v(\cdot,0) - \Po (v(\cdot,0)) ) 
                + \nabla (\Po (v(\cdot,0)) - f) } ^2 \,\mathrm{d} x \\ 
       & =\int_{\Dinner} \sabs{ \nabla (v(\cdot,0) - \Po (v(\cdot,0)))}^2 
                + \sabs{ \nabla (\Po (v(\cdot,0)) - f) }^2 \,\mathrm{d} x \\& 
\geq \norm{\Po (v(\cdot,0))  - f}^2_{H_0^1(\Dinner)},
\end{align*} 
where we have used the fact that $\int_{\Dinner}   \inner{\nabla v(\cdot,0) 
- \nabla \Po (v(\cdot,0))}{ \nabla \Po (v(\cdot,0)) - \nabla  f}\,\mathrm{d} x 
= \int_{\Dinner} \Delta [v(\cdot,0) - \Po (v(\cdot,0))] \, (\Po (v(\cdot,0)) - f)\,\mathrm{d} x =0$. 
From \ref{E:ineq}, we arrive at 
\begin{equation*} 
    \| \Ko_1 \f\|^2_{H_0^1(\Dinner)} 
         =   \|\partial_t v(x,0)-\ini_2\|^2_{L^2(\Dinner,c^{-2})}  +  \|\Po (v(\cdot,0))  - f\|^2_{H_0^1(\Dinner)} 
       \leq   \left( 1-\frac{1}{C^2} \right) \norm{\f}^2_{\Hini} \,.
\end{equation*} 
This finishes the proof for the case $\lambda =1$. 

For  the general case note the identities   
\begin{equation*}
\fixT_\la  =  
\begin{cases} 
(1 - \la )\,\id + \la \fixT_1 & \text{  for } \lambda \in (0,1) 
\\
 (\la-1) \fixT_2 + (2-\la) \fixT_1 & \text{  for } \lambda \in (1,2) \,.
\end{cases}
\end{equation*}
Using the already verified estimates  $\snorm{\fixT_1} < 1$ and $\snorm{\fixT_2} \leq 1$, 
these equalities together with the  triangle inequality for the operator norm  show 
$\snorm{\fixT_\la} <  1$ for all $\lambda \in (0,2)$. 
\end{proof}

\begin{remark}[Iterative time reversal]  \label{rem:timerev} According to  Theorem \ref{thm:timerev} we have  $ \snorm{\fixT_\la}  = \snorm{\id -   \la  \VVo_T \UUo_T} < 1 $ on $\Hini$ for any  $\la \in (0,2)$. As a consequence, the Neumann  series  $\sum_{j \in \N} (\id - \la \VVo_T \UUo_T)^j$  in the operator norm converges to  $(\la \VVo_T \UUo_T)^{-1}$. This gives the inversion formula  
\begin{equation} \label{eq:neumann}
\fini = \sum_{j \in \N} (\id - \la \VVo_T \UUo_T)^j (\la \VVo_T \data) \quad \text{ for data  } \data =\UUo_T \fini \,.
\end{equation} 
For standard  PAT data, the Neumann series solution was first proposed in \cite{stefanov2009thermoacoustic} and further developed in \cite{stefanov2011thermoacoustic,tittelfitz2012thermoacoustic,homan,stefanov2015multiwave,nguyen2016dissipative,palacios2016reconstruction,katsnelson2018convergence,acosta2018thermoacoustic}. For full field PAT data with variable sound speed the  time method has been proposed and analyzed  in \cite{time_reversal_ff}.              
\end{remark}

One disadvantage of the time reversal method is that it requires full knowledge of $u(\cdot,T)$ on $\Dinner^c$. For the partial data case, the use of iterative methods may result in better image reconstruction, specifically if combined with regularization techniques integrating suitable regularization. Note that iterative reconstruction methods for variable sound speed based on an adjoint wave equation have been studied in \cite{huang2013full,belhachmi2016direct,arridge2016adjoint,haltmeier2017analysis,javaherian2018continuous,huang2013full,poudel2019survey}. Uniqueness and stability for standard PAT were studied in \cite{xu2004reconstructions,hristova2008reconstruction,stefanov2009thermoacoustic,stefanov2011thermoacoustic,nguyen2011singularities}, just to name a few.

\section{Numerical inversion}

For the numerical simulations, we consider the complete full-field PAT problem, which also involves the X-ray projection of the pressure wave (see Section~\ref{sec:ffp}). As described in more detail below, we consider a 2D version of full-field PAT and use iterative and variational regularization methods for its inversion.

\subsection{Problem description}

We consider a 2D version of the full-field PAT problem that results in the case of translational symmetry, which means that the sound speed, the damping, and the initial source are independent of the third argument. We assume the 2D object to be contained in the disc $\Dinner = \{(x_1, x_2) \in \mathbb{R}^2 \mid \| (x_1, x_2) \| < 1 \}$ and that the initial data has the form $\fini = (\ini, -c^2 a \ini)$. We then study the numerical solution of
\begin{equation} \label{eq:2d}
	\data = \Xo \Wo \ini + \xi
\end{equation}
where $\xi$ is the noise, $\Wo \ini = u(\cdot, T)|_{D^c}$ is the solution of \eqref{eq:wave1}-\eqref{eq:wave3} with initial data $(\ini, -c^2 a \ini)$ at time $T$, and $\Xo h (\theta, \xi) = \int_{\R} h(\xi \theta + s \theta^\bot) \, ds$ is the exterior Radon transform for $s \in \R \setminus [-1,1]$ and $\theta \in \sph^1_+$ where $\sph^1_+ = \{(\theta_1, \theta_2) \in \mathbb{R}^2 \mid \| (\theta_1, \theta_2) \| = 1 \wedge \theta_1 > 0\}$, the semicircle covering \SI{180}{\degree}.
Note that  $\Wo$ is pseudo-differential operator of order zero (see for example \cite{palaciospartial}) and therefore is a bounded mapping between Sobolev spaces of the same order.

Recovering $\ini$ from data \eqref{eq:2d} will be referred to as the full data case. We will also study a limited angle situation where we restrict the angular range to a proper subset $I \subsetneq \sph^1_+$ modeled by multiplication with the indicator function $\Mo_I$. Figure \ref{fig:data} shows the sound speed $c$, the attenuation coefficient $a$, the initial pressure $\ini$, the forward data $\Wo \ini$, the complete data $\Xo \Wo \ini$, and the limited angle data $\Mo_I \Xo \Wo \ini$, both with noise added.

\begin{figure}[htb!]
\begin{center}
    \includegraphics[width=\textwidth, height=0.5\textwidth]{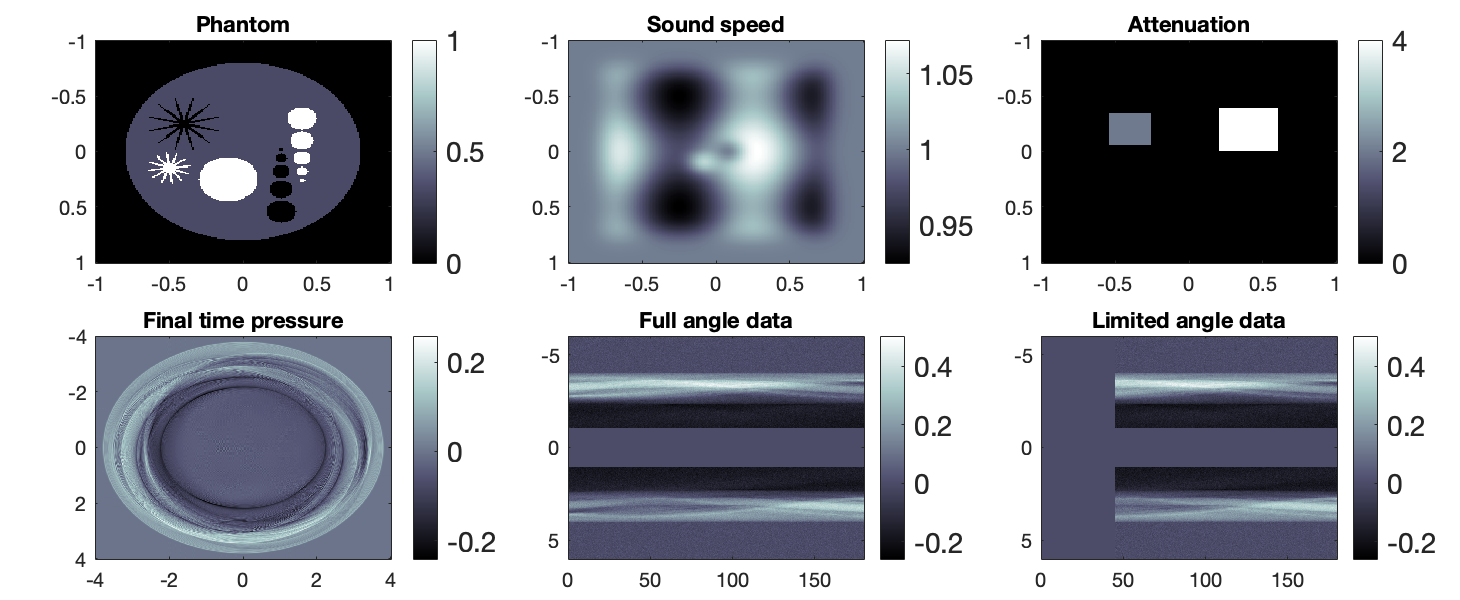} 
\caption{Simulation setup: Phantom $\ini$ (top left), sound speed $c$ (top middle),  attenuation $a$ (top right), final time pressure $\Wo \ini = p(\cdot, T)$ (bottom left), full angle data $\Xo \Wo \ini$ (bottom middle), limited angle data $\Mo_I \Xo \Wo \ini$ (bottom right).
\label{fig:data}
}
\end{center}  
\end{figure}

\subsection{Regularized inversion}

In the theoretical part, we have mathematically proven the uniqueness and stability of the final time wave inversion. Inversion of the Radon transform, however, is ill-posed and requires regularization methods. This is already important for the full angle case and even more important for the limited view case. As the inner well-posed wave inversion also requires an iterative solution, we follow the one-step reconstruction proposed in \cite{haltmeier2019}, where we recover $\ini$ directly from data \eqref{eq:2d} without inverting the X-ray transform as an intermediate step.

Probably the most established regularization methods are variational regularization \cite{engl1996,scherzer2009variational} and iterative regularization \cite{bakushinsky2005iterative,iterativebook}, which we will both use in this work. For the sake of simplicity, we will work in a discrete setting where the unknown and the data are 2D discrete images $X, Y$ and the forward operator has the discrete representation $\Ad$. Details on the discretization are given in the following subsection.

\paragraph{Iterative  regularization:} 
The first class of methods that we use is iterative regularization, which uses iterative methods for minimizing $\norm{\Ad X - Y}^2/2 \to \min_X$, where the regularization effect is introduced by early stopping of the iteration. For the preparation of the numerical results, we use Landweber \cite{hanke1995convergence}, the steepest descent \cite{neubauer1995convergence}, and the CGNE (conjugate gradient for normal equation) method \cite{iterative2}. Landweber and the steepest descent method are gradient methods for $\norm{\Ad X - Y}^2/2$ and can be written as $X_{k+1} = X_{k} - \gamma_k \Ad^*(\Ad X_{k} - Y)$, where the step size is $\gamma_k = \norm{\Ad^*(\Ad X_{k} - \data)}^2/\norm{\Ad \Ad^*(\Ad X_{k} - \data)}^2$ for the steepest descent method and $\gamma_k = \operatorname{const}$ for Landweber's method. We compared them with the CGNE method, which is summarized in Algorithm~\ref{alg:cgne}.

\begin{algorithm}[htb!]
    \SetAlgoLined
    \KwIn{Initial guess $X_0$}
    \KwOut{Final iteration $X_{k}$}
    Initialize 
    $p_0 = Y -\Ad  X_0$,
    $d_0 = \Ad^*p_0$
    $k = 0$\;
    \While{stopping criteria not satisfied}{
        $\alpha_{k}=\norm{\Ad^*p_{k}}^2 / \norm{\Ad d_{k}}^2$\;
        $X_{k+1}=X_{k}+\alpha_k d_{k}$\;
        $p_{k+1} = p_{k}-\alpha_k \Ad^* p_{k}$\;
        $\beta_k = \norm{\Ad^*p_{k+1}}^2/\norm{\Ad^*p_{k}}^2$\;
        $d_{k} = \Ad^*p_{k+1}+\beta_k d_{k}$\;
        $k\gets k+1$\;
    }
    \caption{CGNE algorithm for minimizing $ \norm{ \Ad X  - Y}^2/2$.} \label{alg:cgne}
\end{algorithm}

Together with suitable stopping criteria, the Landweber, the steepest descent, and the CGNE are known to be regularization methods \cite{iterativebook}. All methods behaved similarly, and CGNE turned out to be slightly faster, which has therefore been selected for the results.

\paragraph{Variational regularization:} As an alternative to iterative methods, variational regularization minimizes the generalized Tikhonov functional 
\begin{equation} \label{eq:tikhonov}
\tik_{\lambda, \data}(\ini) \coloneqq 
\frac{1}{2}\norm{\Ad X - Y}^2 + \lambda \reg(X)
\end{equation} 
using a suitable regularization functional $\reg(X)$ where $\lambda$ is the regularization parameter. For minimizing \eqref{eq:tikhonov}, iterative algorithms are used. In this work, we use forward-backward splitting (FBS) algorithm \cite{fbs}  
\begin{equation} \label{eq:fbs}
	X_{k+1}=\prox_{s\lambda \reg}\bigl( X_{k} - s \Ad^*(\Ad X_{k} - Y) \bigr) \,,
\end{equation}
with step size $s$ and regularization parameter $\la$ that is especially useful when the proximal mapping $\prox_{s\lambda R}(X) = \argmin_h\norm{h-X}^2/2+s\lambda \reg(X)$ is  known analytically. Further, for minimizing \eqref{eq:tikhonov} with  the regularizer $\reg(X) = \norm{\Dd X}_1$ where $\Dd$ is the spatial gradient operator, we use the  Chambolle-Pock (CP) primal-dual algorithm \cite{chambolle2011first}. Specifically we adapt the version of \cite{sidky2012convex} to our situation which is summarized in Algorithm \ref{alg:cp}. Both FBS and CP algorithm are known to converge to the minimizer of \eqref{eq:tikhonov}.

\begin{algorithm}[htb!]
    \SetAlgoLined
    \textbf{Initialization:}\\
    $L \leftarrow \norm{(\Ad,\Dd)}_2$, $\tau = 1/L$, $\sigma = 1/L$, $\theta = 1$\;
    $k\leftarrow 0$, $X_0, p_0, q_0 \gets 0$, $u_0\gets X_0$\;
    \While{stopping criteria not satisfied}{
        $p_{k+1} \leftarrow (p_{k}+\sigma(\Ad u_{k}-Y))/(1+\sigma)$\;
        $q_{k+1}\leftarrow \lambda(q_{k}+\sigma \Dd u_{k})/\text{max}\{\lambda\textbf{1},|q_{k}+\sigma \Dd u_{k}|\}$\;
        $X_{k+1}\leftarrow X_{k}-\tau \Ad^* p_{k+1}+\tau \Dd^* q_{k+1}$\;
        $u_{k+1}\leftarrow X_{k+1}+\theta(X_{k+1}-X_{k})$\;
        $k\leftarrow k+1$\;
    }
      \caption{CP algorithm for minimizing $ \norm{ \Ad X  - Y}^2/2 + \lambda \norm{\Dd X}_1$.} \label{alg:cp}
      \end{algorithm}

For all algorithms, we choose a weighted norm on the $Y$-data space  that accounts for the smoothing of the forward map $\Ad$ by a degree of $1/2$. Specifically, we use a discretization of the $\Lambda$-operator  defined by $\mathcal{F}_2 \Lambda \Phi (\theta, \omega)  = (\abs{\omega}/(4\pi)) (\mathcal{F}_2\Phi)(\theta, \omega)$ where $\mathcal{F}_2$ is the Fourier transform in the second variable. The adjoint $\Ad^*$ is then taken with respect to the weighted inner product, which is a discretization of the filtered backprojection (FBP) inversion formula for the full data Radon transform \cite{natterer2001mathematics}.

\subsection{Adjoint operator}
\label{sec:adjoint}

The iterative and variational algorithms  we described  above  requite the  adjoint of the forward map. For that purpose we use  a discretization of the continuous adjoint of the  initial-to-final time operator in $L^2$ space which is computed in this subsection.

\begin{lemma}[$L^2$ adjoint]
Let us consider the time reversed  wave equation   
\begin{align}  
\label{eq:rwave1}
&[c^{-2}(x)\partial_{tt} - a(x)\partial_t - \Delta] u(t,x)
= 0 \quad (x,t)\in\R^d\times (0,\infty)
\\ \label{eq:rwave2}
& u(x,T)   = g(x) \quad x \in\R^d
\\ \label{eq:rwave3}
& \partial_t u(x,0) = c^2(x) a(x) g(x)  \quad x \in \R^d \,.
\end{align}
Then $(\Wo^* g)(x) = \chi_{\Dinner} q(\cdot,0)$.
\end{lemma}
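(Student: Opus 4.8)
The strategy is the standard integration-by-parts computation that characterizes the adjoint of a solution operator of an evolution equation via a dual (time-reversed) equation. Let $\ini \in \Hini$ and let $u$ solve the forward problem \eqref{eq:wave1}-\eqref{eq:wave3}, so that $\Wo \ini = u(\cdot,T)|_{\Dinner^c}$. Let $g$ be an arbitrary element of the data space and let $q$ solve the time-reversed system \eqref{eq:rwave1}-\eqref{eq:rwave3} (I note the statement should read $q$ for the adjoint solution; I will use $q$ throughout). The goal is to show $\langle \Wo \ini, g\rangle = \langle \ini, \chi_\Dinner q(\cdot,0)\rangle$ in the relevant $L^2$ inner products. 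I would start from the quantity $\int_0^T\!\!\int_{\R^d} \bigl(c^{-2}\partial_{tt}u + a\,\partial_t u - \Delta u\bigr)\,q\,\mathrm{d}x\,\mathrm{d}t = 0$ and also from $\int_0^T\!\!\int_{\R^d} u\,\bigl(c^{-2}\partial_{tt}q - a\,\partial_t q - \Delta q\bigr)\,\mathrm{d}x\,\mathrm{d}t = 0$, then subtract.

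The key steps, in order: (1) Integrate by parts twice in $t$ in the $c^{-2}\partial_{tt}$ terms; the interior terms cancel between the two equations and one is left with boundary-in-time contributions at $t=0$ and $t=T$ of the form $c^{-2}(u\,\partial_t q - q\,\partial_t u)\big|_0^T$. (2) Integrate by parts once in $t$ in the damping terms; since the forward equation has $+a\,\partial_t u$ and the dual equation has $-a\,\partial_t q$, the volume terms $\int a\,\partial_t u\, q$ and $\int a\, u\,\partial_t q$ combine into a total time derivative $\partial_t(a\,u\,q)$, again producing only boundary-in-time terms $a\,u\,q\big|_0^T$. (3) Integrate by parts twice in $x$ in the $\Delta$ terms; by the support assumptions ($a$, $c-1$ supported in $\Dinner$, and the finite-speed-of-propagation domain $\Douter$ containing all waves) every spatial boundary term vanishes, so the Laplacian contributions cancel identically. (4) Insert the initial/final conditions: for $u$, $u(\cdot,0)=\ini_1$, $\partial_t u(\cdot,0)=\ini_2$, while at $t=T$ the combination that survives is matched against $g$; for $q$, $q(\cdot,T)=g$ and the condition \eqref{eq:rwave3} on $\partial_t q$ (resp.\ the companion condition at $t=T$) is chosen precisely so that the $t=T$ boundary terms collapse to $\langle \Wo\ini, g\rangle$ and the $t=0$ boundary terms collapse to $\langle \ini, (q(\cdot,0), \partial_t q(\cdot,0))\rangle$; finally restricting to $\Dinner$ via $\chi_\Dinner$ because $\ini$ is supported there gives $(\Wo^* g)(x)=\chi_\Dinner q(\cdot,0)$.

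The main obstacle is bookkeeping rather than analysis: one must be careful about which $L^2$ inner products (weighted by $c^{-2}$ or not) are used on $\Hini$ versus the data space, and verify that the chosen initial condition \eqref{eq:rwave3} for the dual problem is exactly the one that makes the $t=0$ and $t=T$ boundary terms from steps (1)–(2) assemble into the clean pairing — in particular that the damping boundary term $a\,u\,q\big|_{t=T}$ and the $\partial_t q(\cdot,T)$ data term cooperate rather than leaving a residual. A secondary point to check is that all integrations by parts are justified: $u \in C([0,T],H^1_0(\R^d))$ from the standard hyperbolic theory cited before \eqref{eq:fwd}, compact spatial support inside $\Douter$, and the smoothness of $a$ and $c$, together guarantee the manipulations are legitimate and all improper boundary terms vanish.
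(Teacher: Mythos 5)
Your overall strategy --- a duality/integration-by-parts identity pairing the forward solution against the solution of the time-reversed equation --- is the same family of argument the paper uses, but as written your step (4) fails, and the failure is not mere bookkeeping. Pair the forward solution $u$ (with the coupled initial data $(f,-c^2af)$ that defines $\Wo$ in this section) directly against $q$. The surviving boundary-in-time terms are $\int_{\R^d} c^{-2}\bigl(\partial_t u\, q - u\,\partial_t q\bigr) + a\,u\,q\,\mathrm{d}x$ evaluated at $t=T$ and $t=0$. In this pairing the \emph{value} $u(\cdot,T)$ multiplies $\partial_t q(\cdot,T)$, not $q(\cdot,T)=g$; inserting the data of both problems (with the final-time condition $\partial_t q(\cdot,T)=c^2ag$, which is what \eqref{eq:rwave3} should read) one obtains $\int c^{-2}\,\partial_t u(\cdot,T)\,g\,\mathrm{d}x = -\int c^{-2} f\,\partial_t q(\cdot,0)\,\mathrm{d}x$, i.e.\ a relation between the \emph{time derivatives} $\partial_t u(\cdot,T)$ and $\partial_t q(\cdot,0)$, rather than the claimed identity $\int c^{-2}(\Wo f)\,g = \int c^{-2} f\, q(\cdot,0)$. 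No amount of careful bookkeeping turns the one into the other. A secondary issue: here $\Wo$ acts on a single function $f$ through the coupled data $(f,-c^2af)$, so its adjoint maps a function to a function; your proposed pairing of $\fini$ with the pair $(q(\cdot,0),\partial_t q(\cdot,0))$ is computing the adjoint of a different operator, and in any case the symplectic-type boundary pairing is not the $L^2\times L^2$ pairing of pairs.

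The missing idea --- the one genuine idea in the paper's proof --- is to apply the duality identity not to $u$ but to its time antiderivative $v(x,t)=\int_0^t u(x,\tau)\,\mathrm{d}\tau$. Precisely because the initial data are coupled as $(f,-c^2af)$, one checks that $v$ solves the \emph{same} damped wave equation with $v(\cdot,0)=0$ and $\partial_t v(\cdot,0)=f$. Now $\partial_t v(\cdot,T)=u(\cdot,T)=\Wo f$ sits in the slot that pairs with $q(\cdot,T)=g$, and $\partial_t v(\cdot,0)=f$ pairs with $q(\cdot,0)$; the boundary terms containing $v(\cdot,0)$ vanish, and the remaining unwanted contribution $-\int c^{-2}v(\cdot,T)\,\partial_t q(\cdot,T)\,\mathrm{d}x=-\int a\,v(\cdot,T)\,g\,\mathrm{d}x$ is cancelled exactly by the damping boundary term $+\int a\,v(\cdot,T)\,q(\cdot,T)\,\mathrm{d}x$ --- this cancellation is what the apparently odd condition $\partial_t q(\cdot,T)=c^2ag$ is engineered for. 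One then reads off $\int c^{-2}(\Wo f)\,g\,\mathrm{d}x=\int c^{-2} f\,q(\cdot,0)\,\mathrm{d}x$, hence $\Wo^*g=\chi_{\Dinner}q(\cdot,0)$ in $L^2(c^{-2})$. Without this substitution (or the equivalent device of differentiating the dual solution in time) your boundary terms do not assemble into the stated formula.
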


\begin{proof} Let $v$ be the solution of \eqref{eq:rwave1}- \eqref{eq:rwave3} and write   $v(x,t) \coloneqq \int_0^t u(x,\tau) d\tau$. The $\partial_t v = u $ and $v$ satisfies 
\begin{align*}  
&[c^{-2}(x)\partial_{tt} - a(x)\partial_t - \Delta] u(t,x)
= 0 \quad (x,t)\in\R^d\times (0,\infty)
\\ 
& u(x,T)   = 0 \quad x \in\R^d
\\ 
& \partial_t u(x,0) =  g(x)  \quad x \in \R^d \,.
\end{align*}
We have
\begin{align*}
0 &= \int_{\R^d} \int_0^T (c^{-2}(x)v_{tt}(x,t) +a(x)\partial_t v(x,t)-\Delta v(x,t))q(x,t) dt dx \\
 &= \int_{\R^d} c^{-2}(x)(\partial_t v(x,t) q(x,t)-v(x,t)q_t(x,t))|_0^T dx 
 \\ 
 & \qquad\qquad +\int_{\R^d}\int_0^T c^{-2}(x) v(x,t)q_{tt}(x,t) dt dx 
   + \int_{\R^d} a(x)v(x,t)q(x,t)dx |_0^T
   \\ 
&\qquad\qquad - \int_{\R^d}\int_0^T a(x)v(x,t)q_t(x,t) dt dx 
-\int_{\R^d}\int_0^T v(x,t)\Delta q(x,t) dt dx \\
&  = \int_{\R^d} c^{-2}(x)[\partial_t v(x,T) q(x,T) - v(x,T)q_t(x,T) - \partial_t v(x,0) q(x,0)+ v(x,0) q_t(x,0)]dx \\
& \qquad\qquad +\int_{\R^d} a(x)[v(x,T)q(x,T)-v(x,0)q(x,0)]dx 
\\ & \qquad\qquad + \int_{\R^d}\int_0^T [c^{-2}(x) q_{tt}(x,t)-a(x)q_t(x,t)-\Delta q(x,t)]v(x,t)dt dx \\
&=\int_{\R^d} c^{-2}(x)[u(x,T) g(x) - v(x,T)c^2(x) a(x)g(x) 
\\ & \qquad\qquad - f(x) q(x,0)]dx+ \int_{\R^d} a(x)v(x,T)g(x)dx.
\end{align*}
Therefore  $\int_{\R^d} c^{-2}(x)[(W_Tf)(x) g(x)- f(x) q(x,0)]dx =0,$ which is $ \Wo^*g  = \chi_{\Dinner} q(\cdot,0)$
\end{proof}

\subsection{Numerical details}
\label{sec:details}
        
For the implementation, all involved functions and operators are discretized and implemented in Matlab. We discretize the photoacoustic source, sound speed, and attenuation coefficient on a regular $201 \times 201$ Cartesian grid on the domain $[-1,1]^2$ and the solution of the wave equation up to time $T$ on an $801 \times 801$ Cartesian grid covering the domain $[-4,4]^2$. We use 600 discrete time steps in the time interval $[0,3]$. All algorithms require the forward map $\Ad$ as the discretization of $\Mo_I \Xo \Wo$ and $\Ad^*$ as the discretization of the adjoint map $\Wo^* \Xo^* \Mo_I$. This is realized by discretizing each factor as described next.

\paragraph{Wave operator:} For computing $\Wo$ and its adjoint, we numerically solve the forward damped wave equation \eqref{eq:wave1}-\eqref{eq:wave3} and the adjoint equation \eqref{eq:rwave1}-\eqref{eq:rwave3} with a variant of the k-space method described in \cite[Appendix A.2]{haltmeier2019reconstruction}. The k-space method is based on a Fourier expansion in the spatial variable \cite{kspace2, kspace3} and gives periodic solutions. The dimensions have been selected such that neither the forward solution on $[-4,4]^2$ nor the adjoint solution in $\Dinner$ are affected by the periodic extension.

\paragraph{Radon transform:} The Radon transform $\Xo$ of the $801 \times 801$ final time  pressure is computed for 1000 directions evenly distributed in $[\SI{0}{\degree},\SI{180}{\degree}]$ and evaluated with the Matlab built-in function \texttt{radon.m}. The adjoint is computed with the built-in function \texttt{iradon.m} using the Ram-Lak filter accounting for the preconditioning mentioned above.

\paragraph{Data restriction:}
In the full angular case, we require the Radon transform $\Xo h(\theta, s)$ for values $\abs{s} \geq 1$. This will be realized simply by a mask taking the values either 0 (if $\abs{s} < 1$) or 1 (if $\abs{s} \geq 1$). The same strategy is followed for including a limited angular range where the mask has value one for angles in the available angular range. For the numerical simulations below, we use a missing angular range of \SI{45}{\degree}. The effect of this can be seen in the last two images in Figure~\ref{fig:data}.

\begin{figure}[htb!]
{}\hspace{0.14\textwidth} CGNE \hspace{0.25\textwidth} FBS \hspace{0.27\textwidth} CP  \\
\includegraphics[width=\textwidth, height=0.5\textwidth]{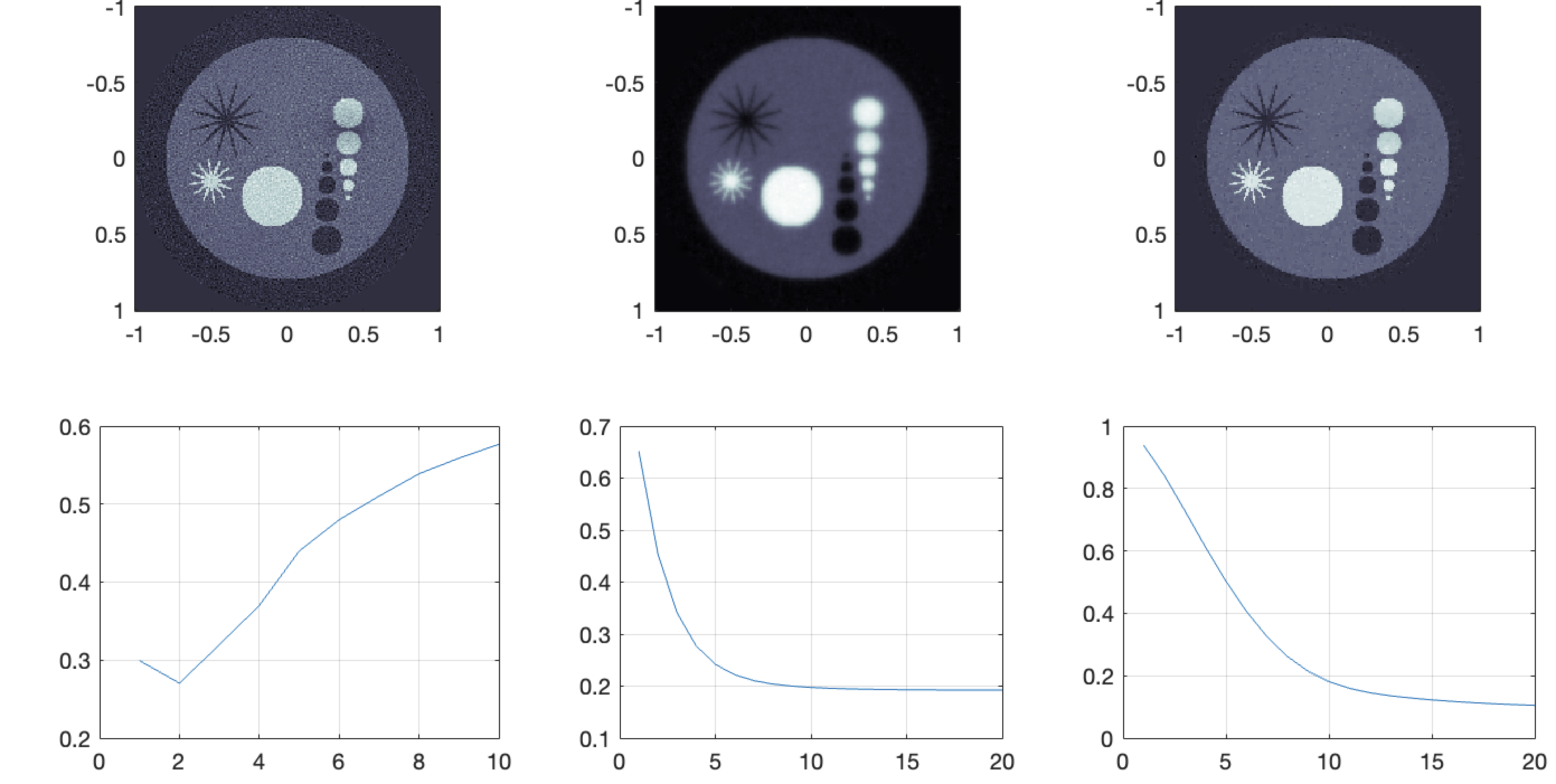} 
\caption{Results for full angular data where the top row shows the optimal reconstructions and the bottom row the relative reconstruction errors depending on the iteration index. Left: CGNE iterative regularization. Middle: FBS for quadratic regularization. Right: CP for TV-regularization.} \label{fig:full}
\end{figure}

\subsection{Numerical results}

Next, we present some results of our numerical simulations. We consider both the full angular range as well as the limitedangle case. As the ill-posedness of inverting $\Mo_I \Xo \Wo$ is governed by the ill-posedness of $\Mo_I \Xo$, the problem is only mildly ill-posed in the full angular case but severely ill-posed in the limited angle case. The data has been numerically computed using the discrete forward maps outlined above, and we additionally added Gaussian white noise with a standard deviation of \SI{0.5}{\percent} of the mean of the simulated data over all data pixels.

\paragraph{Full angular range:} 
We first use the complete angular data shown in the bottom middle image in Figure~\ref{fig:data}. All iterative regularization methods performed similarly, and we selected the CGNE for the presented results, which was slightly faster than the Landweber and the steepest descent method. Besides that, we show results using FBS with a quadratic regularizer and the CP algorithm for TV regularization. Reconstruction results and the relative $\ell^2$ reconstruction error as a function of the iteration index for all three methods are shown in Figure~\ref{fig:full}. In any case, the reconstruction with minimal relative reconstruction error is shown, which is $0.27$, $0.19$, and $0.11$ for CGNE, quadratic regularization, and TV regularization, respectively.

\begin{figure}[htb!]
{}\hspace{0.14\textwidth} CGNE \hspace{0.25\textwidth} FBS \hspace{0.27\textwidth} CP  \\
\includegraphics[width=\textwidth, height=0.5\textwidth]{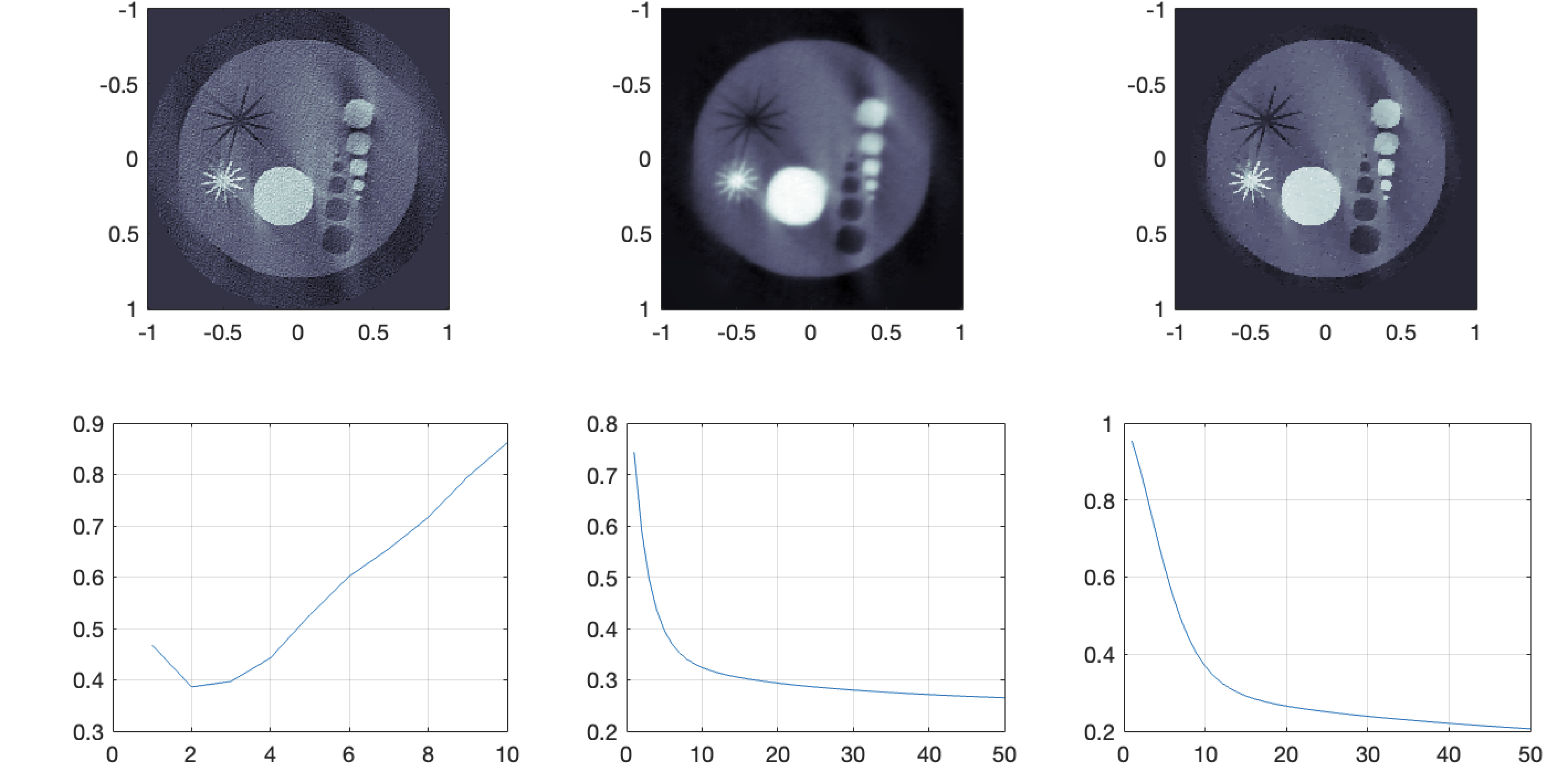}
\caption{Results for limited angular data where the top row shows the optimal reconstructions and the bottom row the relative reconstruction errors depending on the iteration index. Left: CGNE iterative regularization. Middle: FBS for quadratic regularization. Right: CP for TV-regularization} \label{fig:limited}
\end{figure}

\paragraph{Limited view data:} As another set of results, we restrict the angular range to $[\SI{45}{\degree}, \SI{180}{\degree}]$. We again  present results for  CGNE, FBS with a quadratic regularizer, and the CP algorithm for TV regularization. Reconstruction results and the relative $\ell^2$ reconstruction error as a function of the iteration index for all three methods are shown in Figure~\ref{fig:limited}. In any case, the reconstruction with minimal relative reconstruction error is shown, which is $0.39$, $0.27$, and $0.21$ for CGNE, quadratic regularization, and TV regularization, respectively.

\paragraph{Discussion:} 
In all our tests, the CGNE, as well as other iterative regularization techniques, turned out to be semi-convergent, reflecting the ill-posedness of the full-field PAT problem due to the involved Radon transform. Thus, the error decreases until a certain iteration and then starts increasing. Early stopping of iterations, which also means imposing regularization on the reconstruction process, yields a stable approximate solution. As another regularization technique, we use variational methods with either a quadratic regularizer $\reg(X) = \norm{\Dd(X)}^2_2$ or the TV penalty $\reg(X) = \norm{\Dd(X)}_1$. For the quadratic regularizer, we use forward-backward splitting, and for the TV penalty, we use the Chambolle-Pock primal-dual algorithm. As can be seen from Figures~\ref{fig:full} and \ref{fig:limited}, all methods are stable and convergent for both scenarios. In terms of approximating the ground truth image, the TV method yields fewer artifacts and a smaller reconstruction error. The reconstruction errors are sharp near the boundary, which reflects the choice of the TV term.

\section{Conclusions}
\label{sec:conclusion}

In this paper, we considered the full-field attenuated PAT problem with variable sound speed and variable damping. The associated inverse problem is a combination of an initial-to-final wave inversion problem and the X-ray transform. The latter is theoretically already well investigated, which is not the case for the wave inversion part. In this paper, we established the uniqueness and stability of wave inversion, showing that the ill-posedness is guided by the X-ray transform part. From the practical side, we used iterative and variational regularization techniques, including conjugate gradient, forward-backward splitting, and the CP method. All methods produced quite accurate results. Among these, in both full-angle and limited-angle data contaminated with noise, the Chambolle-Pock method applied to the TV regularization term performed the best. An interesting line of future work is the application of learning techniques, from which we expect further improvement. Another aspect we will work on is the application to real-world data and specific limited data constraints there.


\end{document}